\providecommand{\abs}[1]{\left\lvert#1\right\rvert}
\DeclareMathOperator{\Hom}{Hom}
\DeclareMathOperator{\Char}{char}
\DeclareMathOperator{\PGL}{PGL}
\DeclareMathOperator{\Aut}{Aut}
\DeclareMathOperator{\Res}{Res}\DeclareMathOperator{\Disc}{Disc}
 \DeclareMathOperator{\End}{End}
\DeclareMathOperator{\GL}{GL}
\newcommand{\tth}{^{\operatorname{th}}}
\theoremstyle{plain}
\newtheorem{thm}{Theorem}
\newtheorem*{thm*}{Theorem}
\newtheorem{lem}[thm]{Lemma}
\newtheorem{prop}[thm]{Proposition}
\newtheorem{cor}[thm]{Corollary}
\theoremstyle{definition}
\newtheorem{defn}[thm]{Definition}
\newtheorem{exmp}[thm]{Example}
\newtheorem*{rem}{Remark}
\theoremstyle{remark}
\def\Z{\mathbb{Z}}
\def\Q{\mathbb{Q}}
\def\P{\mathbb{P}}
\def\A{\mathbb{A}}
\def\C{\mathbb{C}}
\begin{document}
    \title{Almost Newton, sometimes Latt\`es}
    \author{Benjamin Hutz and Lucien Szpiro}

    \thanks{The two authors received funding from the NSF Grants DMS-0854746 and DMS-0739346. The second author was also partially funded by PSC Cuny grant EG 36276.}

\subjclass[2010]{
37P05, 
37P45 
(primary);
14H52 
(secondary)}

\keywords{dynamical systems, Lattes maps, Newton's method, moduli space}

    \maketitle

\begin{abstract}
    Self-maps everywhere defined on the projective space $\P^N$ over a number field or a function field  are the basic objects of study in the arithmetic of dynamical systems. One reason is a theorem of Fakkruddin \cite{Fakhruddin} (with complements in \cite{Bhatnagar}) that asserts that a ``polarized'' self-map of a projective variety is essentially  the restriction of a self-map of the projective space given by the polarization. In this paper we study the natural self-maps defined the following way: $F$ is a homogeneous polynomial of degree $d$ in $(N+1)$ variables $X_i$ defining a smooth hypersurface. Suppose the characteristic of the field does not divide $d$ and define the map of partial derivatives $\phi_F = (F_{X_0},\ldots,F_{X_N})$.  The map $\phi_F$ is defined everywhere due to the following formula of Euler: $\sum X_i F_{X_i} = d F$,
    which implies that a point where all the partial derivatives vanish is a non-smooth point of the hypersuface $F=0$. One can also compose such a map with an element of $\PGL_{N+1}$.   In the particular case addressed in this article, $N=1$, the smoothness condition means that $F$ has only simple zeroes. In this manner, fixed points and their multipliers are easy to describe and, moreover, with a few modifications we recover classical dynamical systems like the Newton method for finding roots of polynomials or the Latt\`es map corresponding to the multiplication by $2$ on an elliptic curve.
\end{abstract}

\section{Introduction}
    Self-maps everywhere defined on the projective space $\P^N$ over a number field or a function field  are the basic objects of study in the arithmetic of dynamical systems. One reason is a theorem of Fakkruddin \cite{Fakhruddin} (with complements in \cite{Bhatnagar}) that asserts that a ``polarized'' self-map of a projective variety is essentially the restriction of a self-map of the projective space given by the polarization. In this paper we study the natural self-maps defined the following way: $F$ is a homogeneous polynomial of degree $d$ in $(N+1)$ variables $X_i$ defining a smooth hypersurface. Suppose the characteristic of the field does not divide $d$ and define the map of partial derivatives $\phi_F = (F_{X_0},\ldots,F_{X_N})$.  The map $\phi_F$ is defined everywhere due to the following formula of Euler:
    \begin{equation*}
        \sum X_i F_{X_i} = d F,
    \end{equation*}
    which implies that a point where all the partial derivatives vanish is a non-smooth point of the hypersuface $F=0$. One can also compose such a map with an element of $\PGL_{N+1}$.   In the particular case addressed in this article, $N=1$, the smoothness condition means that $F$ has only simple zeroes. In this manner, fixed points and their multipliers are easy to describe and, moreover, with a few modifications we recover classical dynamical systems like the Newton method for finding roots of polynomials or the Latt\`es map corresponding to the multiplication by $2$ on an elliptic curve.  We begin by recalling some of the definitions and objects we need from dynamical systems before stating the main results.
    
    Given a morphism $\phi:\P^1 \to \P^1$ we can iterate $\phi$ to create a (discrete) dynamical system.  We denote the $n\tth$ iterate of $\phi$ as $\phi^n = \phi(\phi^{n-1})$. Calculus students are exposed to dynamical systems through the iterated root finding method known as Newton's Method where, given a differentiable function $f(x)$ and an initial point $x_0$, one constructs the sequence
    \begin{equation*}
        x_{n+1} =\phi(x_n) = x_n - \frac{f(x_n)}{f'(x_n)}.
    \end{equation*}
    In general, this sequence converges to a root of $f(x)$.  In terms of dynamics, we would say that the roots of $f(x)$ are attracting fixed points of $\phi(x)$.  More generally, one says that $P$ is a \emph{periodic point of period $n$} for $\phi$ if $\phi^n(P) = P$.

    A common example of a dynamical system with periodic points is to take an endomorphism of an elliptic curve $[m]:E \to E$ and project onto the first coordinate.  This construction induces a map on $\P^1$ called a \emph{Latt\`es map}, and for $m \in \Z$ its degree is $m^2$ and its periodic points are the torsion points of the elliptic curve.

    Denote $\Hom_d$ as the set of degree $d$ morphisms on $\P^1$.  There is a natural action on $\P^1$ by $\PGL_{2}$ through conjugation that induces an action on $\Hom_d$.  We take the quotient as $M_d = \Hom_d/\PGL_{2}$.  By \cite{Silverman9}, the moduli space $M_d$ is a geometric quotient.  We say that $\gamma \in \PGL_{2}$ is an \emph{automorphism} of $\phi \in \Hom_d$ if $\gamma^{-1} \circ \phi \circ \gamma = \phi$.  We denote the (finite \cite{petsche}) group of automorphisms as $\Aut(\phi)$.

    Let $K$ be a number field and $F\in K[X,Y]$ be a homogeneous polynomial of degree $d$ with distinct roots.  Define
    \begin{equation*}
        \phi_F(X,Y) = [F_Y,-F_X]:\P^1 \to \P^1.
    \end{equation*}
    \begin{rem}
        We can think of $\phi_F$ as the linear combination
        \begin{equation*}
            \begin{pmatrix} 1 & 0 \\ 0 & -1\end{pmatrix} \begin{pmatrix} F_X \\ F_Y \end{pmatrix}.
        \end{equation*}
        It may also be interesting to study other families of linear combinations of the partial derivatives arising from other elements of $\GL_2$, but we do not address them in this article.
    \end{rem}
    In Section \ref{sect_almost_newton} we examine the dynamical properties of these maps.
    \begin{thm*}[Theorem \ref{thm_fixed}]
        The fixed points of $\phi_F(X,Y)$ are the solutions to $F(X,Y)=0$, and the multipliers of the fixed points are $1-d$.
    \end{thm*}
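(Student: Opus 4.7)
For the first assertion (identifying fixed points), I would use Euler's identity directly. A point $[a:b] \in \P^1$ is fixed by $\phi_F$ precisely when $[F_Y(a,b) : -F_X(a,b)] = [a:b]$, which, after cross-multiplying, is equivalent to $a F_X(a,b) + b F_Y(a,b) = 0$. By Euler's formula this equals $d \cdot F(a,b)$, and since $\Char(K) \nmid d$, vanishing of this quantity is equivalent to $F(a,b) = 0$. Because $F$ has $d$ simple zeros, this immediately gives the $d$ fixed points of $\phi_F$ and confirms that $F_X(a,b), F_Y(a,b)$ do not both vanish at any such point (smoothness of the hypersurface).

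For the multiplier computation, the plan is to dehomogenize. Fix a fixed point not at infinity, set $Y=1$, and let $f(x) := F(x,1)$, a polynomial of degree at most $d$ whose finite simple roots are the affine fixed points. A direct computation using $F(X,Y) = Y^d f(X/Y)$ gives
\begin{equation*}
   F_X(x,1) = f'(x), \qquad F_Y(x,1) = d f(x) - x f'(x),
\end{equation*}
so in this chart the map is
\begin{equation*}
   \phi_F(x) \;=\; \frac{F_Y(x,1)}{-F_X(x,1)} \;=\; x - d\,\frac{f(x)}{f'(x)},
\end{equation*}
which I would note is a ``$d$-fold Newton's method.'' Differentiating,
\begin{equation*}
   \phi_F'(x) \;=\; 1 - d\cdot\frac{f'(x)^2 - f(x)f''(x)}{f'(x)^2}.
\end{equation*}
At a root $x_0$ of $f$ the term $f(x_0)f''(x_0)$ drops, leaving $\phi_F'(x_0) = 1 - d$. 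The simple-root hypothesis is exactly what ensures $f'(x_0) \neq 0$, so the quotient is well-defined.

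For a fixed point at infinity, I would run the symmetric calculation in the chart $X=1$ with $\tilde f(y) := F(1,y)$. The same algebra yields $\phi_F(y) = y - d\,\tilde f(y)/\tilde f'(y)$, and if $\tilde f(0) = F(1,0) = 0$, the same derivative computation at $y=0$ gives multiplier $1-d$. Since $\phi_F$ is defined over $\P^1$ and covered by these two charts, every fixed point is handled.

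The only real obstacle is bookkeeping: making sure the affine formula for $\phi_F$ is computed correctly and that the ``point at infinity'' case is treated explicitly rather than by analogy. Once the identity $\phi_F(x) = x - d f(x)/f'(x)$ is in hand, both conclusions fall out of Euler's formula and a one-line quotient rule.
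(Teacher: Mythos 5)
Your identification of the fixed points via Euler's formula is the same as the paper's. For the multipliers at affine fixed points, you and the paper both dehomogenize to get $\tilde{\phi}_F(x) = x - d\,f(x)/f'(x)$ and apply the quotient rule. The difference is in how you handle a fixed point at infinity: the paper invokes the rational-fixed-point index formula $\sum_i 1/(1-\lambda_i) = 1$ (together with the observation that no multiplier equals $1$, since $F$ has simple roots) to deduce $\lambda_\infty = 1-d$, whereas you switch to the chart $X=1$, set $\tilde f(y) = F(1,y)$, and show that Euler's identity gives the same form $y - d\,\tilde f(y)/\tilde f'(y)$, so the derivative computation carries over verbatim. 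Your chart-switching argument is correct (one checks $\phi_F(y) = -F_X(1,y)/F_Y(1,y) = y - d\,\tilde f(y)/\tilde f'(y)$, and the simple-root hypothesis on $F$ guarantees $\tilde f'(0)\neq 0$ when $F(1,0)=0$), and it has the advantage of being self-contained, not relying on the multiplier-sum identity; the paper's route is shorter but imports a nontrivial global relation. Both are valid.
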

    \begin{thm*}[Theorem \ref{thm_family} and Corollary \ref{cor_family}]
        The family of maps of the form $\phi_F=(F_Y,-F_X):\P^1 \to \P^1$ is invariant under the conjugation action by $\PGL_2$.
    \end{thm*}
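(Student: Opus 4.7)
The plan is to carry out a direct chain-rule computation that exhibits $\gamma^{-1} \circ \phi_F \circ \gamma$ as $\phi_G$ for an explicit $G$. Given $\gamma = \begin{pmatrix} a & b \\ c & d \end{pmatrix} \in \GL_2$ acting on $(X,Y)$ by $\gamma(X,Y) = (aX+bY,\, cX+dY)$, I would set
\[
G(X,Y) := F\bigl(\gamma(X,Y)\bigr),
\]
which is again a homogeneous polynomial of degree $d$. Since $\gamma$ is invertible, the roots of $G$ are the $\gamma^{-1}$-images of the roots of $F$ and hence are again distinct; thus $\phi_G$ lies in the family under study. It then suffices to verify the identity
\[
\phi_G = (\det \gamma)\cdot \gamma^{-1} \circ \phi_F \circ \gamma,
\]
because the scalar factor $\det \gamma$ is invisible in $\PGL_2$.

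The key computation is the chain rule applied to $G = F \circ \gamma$:
\[
G_X = a\,(F_X \circ \gamma) + c\,(F_Y \circ \gamma), \qquad G_Y = b\,(F_X \circ \gamma) + d\,(F_Y \circ \gamma).
\]
Writing $\phi_G = (G_Y, -G_X)$ as a column vector and factoring out the $F$-derivatives, I would exhibit
\[
\begin{pmatrix} G_Y \\ -G_X \end{pmatrix} = \begin{pmatrix} d & -b \\ -c & a \end{pmatrix} \begin{pmatrix} F_Y \circ \gamma \\ -(F_X \circ \gamma) \end{pmatrix},
\]
and then recognize the $2\times 2$ matrix as the classical adjugate of $\gamma$, that is $(\det \gamma)\,\gamma^{-1}$. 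This yields exactly the claimed identity $\phi_G = (\det\gamma)\,\gamma^{-1}(\phi_F \circ \gamma)$, and passing to $\PGL_2$ completes the proof.

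The main obstacle is really just the careful bookkeeping of signs and transposes. The minus sign in the definition $\phi_F = (F_Y, -F_X)$ is exactly what converts the transpose that naturally arises from the chain rule (since $(G_X, G_Y)^t = \gamma^t (F_X \circ \gamma,\, F_Y \circ \gamma)^t$) into the inverse required for conjugation; without this sign one would instead find $\phi_G = \gamma^t \circ \phi_F \circ \gamma$, and the family would not be $\PGL_2$-invariant. Once the matrix identity is in hand, the accompanying corollary should follow immediately by observing that the assignment $F \mapsto \phi_F$ intertwines the right action $F \mapsto F \circ \gamma$ on homogeneous polynomials with the conjugation action on $\Hom_d$.
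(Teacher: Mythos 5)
Your proof is correct, and the matrix computation checks out: with $u = F_X\circ\gamma$, $v = F_Y\circ\gamma$, the adjugate $\begin{pmatrix} d & -b \\ -c & a \end{pmatrix}$ applied to $(v,-u)^t$ does give $(bu+dv,\,-au-cv)^t = (G_Y,-G_X)^t$, so $\phi_{F\circ\gamma}$ and $\gamma^{-1}\circ\phi_F\circ\gamma$ agree as maps on $\P^1$. But you take a genuinely different route from the paper. The paper first proves a characterization (Theorem \ref{thm_family}): any degree-$(d-1)$ map whose $d$ fixed points all have multiplier $1-d$ is of the form $\phi_F$. It obtains this by writing an arbitrary such map on the affine chart as $x - P(x)/Q(x)$, matching fixed points to force $P=cf$, and using the multiplier condition together with a Vandermonde argument to pin down $Q = (c/d)f'$. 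The invariance in Corollary \ref{cor_family} then falls out in one line, because conjugation preserves multipliers and permutes fixed points, so the conjugated map still satisfies the characterizing condition. Your chain-rule argument is more elementary and more explicit: it bypasses any analysis of fixed points or multipliers and directly exhibits the formula $G = F\circ\gamma$, which also verifies the remark the paper makes just before Theorem \ref{thm_family} without actually proving it there. What the paper's route buys in exchange is the converse direction (that the multiplier condition \emph{characterizes} the family), which it needs anyway to identify the image of $F\mapsto\phi_F$ in $\operatorname{Hom}_{d-1}$ and to get the second sentence of Corollary \ref{cor_family} (that the family is parameterized by $d-3$ points of $\P^1\setminus\{0,1,\infty\}$). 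Your argument alone establishes $\PGL_2$-invariance but not that characterization, so it proves the statement as quoted but would not suffice to replace Theorem \ref{thm_family} wholesale.
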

    We also give a description of the higher order periodic points and a recursive definition of the polynomial whose roots are the $n$-periodic points. We examine related, more general Newton-Raphson maps and, finally, recall the connection to invariant theory and maps with automorphisms.

    In Section \ref{sect_lattes} we explore the connection with Latt\`es maps.
    \begin{thm*}[Theorem \ref{thm_lattes}]
        Maps of the form
        \begin{equation*}
            \tilde{\phi}(x) = x - 3\frac{f(x)}{f'(x)}
        \end{equation*}
        are the Latt\`es maps from multiplication by $[2]$ and $f(x) = \prod(x-x_i)$ where $x_i$ are the $x$-coordinates of the $3$-torsion points.
    \end{thm*}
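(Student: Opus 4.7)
The plan is a direct computation showing that $\tilde{\phi}$ coincides with the classical $[2]$-duplication formula on $\P^1$. Place $E$ in short Weierstrass form $y^2 = g(x) = x^3 + ax + b$ (the natural setting here, in characteristic different from $2$ and $3$). The three-division polynomial of $E$ is
\begin{equation*}
    \psi_3(x) = 3x^4 + 6ax^2 + 12bx - a^2,
\end{equation*}
and its four distinct roots are exactly the $x$-coordinates of the eight nontrivial $3$-torsion points of $E$. Since $f/f'$ is unchanged by rescaling $f$, I would take $f$ in the monic normalization $f = \tfrac{1}{3}\psi_3 = x^4 + 2ax^2 + 4bx - a^2/3$, so that indeed $f = \prod(x - x_i)$.

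The heart of the argument is the pleasant identity
\begin{equation*}
    f'(x) = 4x^3 + 4ax + 4b = 4g(x).
\end{equation*}
Plugging this into the definition and clearing denominators, I obtain
\begin{equation*}
    \tilde{\phi}(x) = x - 3\frac{f(x)}{f'(x)} = \frac{4xg(x) - 3f(x)}{4g(x)} = \frac{x^4 - 2ax^2 - 8bx + a^2}{4(x^3 + ax + b)}.
\end{equation*}

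Separately I would derive the $[2]$-Lattès map from the chord--tangent construction: the tangent slope at $P = (x,y)$ is $\lambda = g'(x)/(2y)$, so $x([2]P) = \lambda^2 - 2x$, giving
\begin{equation*}
    x([2]P) = \frac{g'(x)^2 - 8xg(x)}{4g(x)} = \frac{(3x^2 + a)^2 - 8x(x^3 + ax + b)}{4(x^3 + ax + b)} = \frac{x^4 - 2ax^2 - 8bx + a^2}{4(x^3 + ax + b)},
\end{equation*}
matching $\tilde{\phi}$ identically. This is the entire argument.

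The main obstacle is essentially bookkeeping: one must pick the correct scalar normalization of $f$ and recognize the identity $f' = 4g$, which is precisely what turns a Newton-type map $x - 3f/f'$ into a duplication formula. Minor side issues to handle are (a) excluding bad characteristic, since $\psi_3$ degenerates in characteristic $3$ and a general Weierstrass form is needed in characteristic $2$, and (b) checking that no common factor of numerator and denominator drops the degree below $4$. For (b), a $2$-torsion $x$-coordinate $\alpha$ satisfies $g(\alpha)=0$, and using $\alpha^3 = -a\alpha - b$ the numerator evaluates to $-3a\alpha^2 - 9b\alpha + a^2$, which is generically nonzero; these are precisely the poles of the Lattès map corresponding to $[2]T = O$, so $\deg \tilde{\phi} = 4 = \deg [2]$ as required.
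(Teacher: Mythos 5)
Your proof is correct, and it takes a genuinely different route from the paper's. The paper's own argument is moduli-theoretic: it quotes from Silverman that the $[2]$-Latt\`es map has the four $3$-torsion $x$-coordinates as affine fixed points with multiplier $-2$ and fixes $\infty$ with multiplier $4$, and then invokes Theorem~\ref{thm_modified_newton} (with $r=3$, degree $4$) to conclude that any map with exactly that multiplier spectrum must be of the form $x - 3f/f'$. In other words, the paper's proof is a one-line application of its classification-by-multipliers machinery. You instead normalize to short Weierstrass form and verify the identity directly, the key structural observation being $f' = 4g$ for $f = \tfrac{1}{3}\psi_3$ monic and $g$ the cubic in $y^2=g(x)$; this reduces both $x - 3f/f'$ and the chord--tangent duplication formula to the same rational function $\bigl(x^4 - 2ax^2 - 8bx + a^2\bigr)/\bigl(4g(x)\bigr)$. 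Your version is more elementary and self-contained --- it does not depend on Theorems~\ref{thm_family} or \ref{thm_modified_newton}, nor on the cited multiplier computation --- and it surfaces the pleasing fact that the $3$-division polynomial is, up to scalar, the antiderivative of $4g$. The paper's version is shorter and fits its narrative (modified Newton maps are determined by their fixed-point multipliers), and it applies without committing to a particular Weierstrass normalization. One small point: at the end you say the numerator is ``generically nonzero'' at a $2$-torsion abscissa; you could close this cleanly by noting that the discriminant condition on $E$ forces the resultant of numerator and denominator to be nonzero (or simply that $[2]$ has degree $4$), so no genericity is needed.
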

    Finally, when $E$ has complex multiplication ($m \not\in \Z$) the associated $\phi_F$ can have a non-trivial automorphism group.
    \begin{thm*}[Theorem \ref{thm_CM}]
        If $E$ has $\Aut(E) \supsetneq \Z/2\Z$ and the zeros of $F(X,Y)$ are torsion points of $E$, then an induced map $\phi_F$ has a non-trivial automorphism group.
    \end{thm*}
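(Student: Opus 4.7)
The plan is to exhibit an explicit nontrivial element of $\Aut(\phi_F)$ built from the extra automorphism of $E$. Since $\Aut(E) \supsetneq \Z/2\Z$ forces $j(E) \in \{0, 1728\}$, I would start by fixing a Weierstrass model in which the extra automorphism $\alpha \in \Aut(E)$ takes the normal form $(x,y) \mapsto (\zeta x, \pm y)$, with $\zeta = -1$ when $j(E) = 1728$ and $\zeta$ a primitive cube root of unity when $j(E) = 0$. The $x$-coordinate projection $E \to E/[\pm 1] = \P^1$ then descends $\alpha$ to an element $\sigma \in \PGL_2$ represented by $[X:Y] \mapsto [\zeta X : Y]$. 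Theorem \ref{thm_family} guarantees that the coordinate change used to bring $E$ into this normal form keeps us inside the family of maps of the form $\phi_F$, so the reduction is harmless.

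Next I would use that $\alpha$ preserves the torsion subgroup of $E$ (being a group homomorphism), so $\sigma$ preserves the set of $x$-coordinates of torsion points. Assuming $F$ is chosen so that its zero set is $\sigma$-invariant---for instance the $x$-coordinates of the full $n$-torsion $E[n]$ as in the Latt\`es examples of Theorem \ref{thm_lattes}---the linear map $\sigma$ permutes the simple roots of the homogeneous polynomial $F$, which forces
\begin{equation*}
    F(\zeta X, Y) = c\, F(X,Y)
\end{equation*}
for some nonzero scalar $c$ obtained by comparing leading coefficients.

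From this semi-invariance I would differentiate in $X$ and $Y$ to obtain $F_X(\zeta X, Y) = (c/\zeta) F_X(X,Y)$ and $F_Y(\zeta X, Y) = c\, F_Y(X,Y)$, and then read off in projective coordinates
\begin{equation*}
    \phi_F(\sigma(X,Y)) = [c F_Y : -(c/\zeta) F_X] = [\zeta F_Y : -F_X] = \sigma(\phi_F(X,Y)).
\end{equation*}
Hence $\sigma^{-1} \circ \phi_F \circ \sigma = \phi_F$, and since $\zeta \neq 1$ this exhibits a nontrivial element of $\Aut(\phi_F)$.

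The main obstacle I foresee is verifying the semi-invariance hypothesis cleanly: one has to ensure that the zero set of $F$ is genuinely $\sigma$-stable, which is automatic when the roots are the $x$-coordinates of $E[n]$ but requires case analysis for general $\Aut(E)$-compatible torsion subsets. A secondary subtlety is arranging the Weierstrass model so that $\sigma$ is literally diagonal rather than merely conjugate to a diagonal map---equivalently, placing the identity of $E$ at $[0:1]$ and the fixed point of $\alpha$ on the $x$-line appropriately. Once both of these are in place, the rest is a short chain-rule calculation as above.
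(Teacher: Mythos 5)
Your proposal follows essentially the same route as the paper: identify the extra automorphism of $E$ (in the $j \in \{0,1728\}$ cases) in the normal form $(x,y)\mapsto(u^2x,u^3y)$, descend it to the diagonal element $\sigma = \operatorname{diag}(u^2,1)\in\PGL_2$ via the $x$-coordinate projection, observe that $F$ is a semi-invariant under $\sigma$, and conclude $\sigma\in\Aut(\phi_F)$; the chain-rule computation you carry out explicitly is exactly what the paper defers to Theorem~\ref{prop_auto}. The caveat you raise about needing the zero set of $F$ to be genuinely $\sigma$-stable (rather than merely consisting of torsion $x$-coordinates) is a real subtlety that the paper also leaves implicit in the phrase ``which fixes a polynomial whose roots are torsion points,'' so flagging it is appropriate, not a defect of your argument.
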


\section{Almost Newton Maps}\label{sect_almost_newton}
    Let $K$ be a field and consider a two variable homogeneous polynomial $F(X,Y) \in K[X,Y]$ of degree $d$ with no multiple roots.  Consider the degree $d-1$ map
    \begin{align*}
        \phi_F&:\P^1 \to \P^1 \\
        (X,Y) &\mapsto (F_Y(X,Y), -F_X(X,Y)).
    \end{align*}\
    In particular, $F_X = F_Y =0$ has no nonzero solutions and so $\phi_F$ is a morphism.  Label $x = \frac{X}{Y}$ and consider
    \begin{equation*}
        f(x) = \frac{F(X,Y)}{Y^d}
    \end{equation*}
    and notice that
    \begin{equation*}
        f'(x) = \frac{F_X(X,Y)}{Y^{d-1}}.
    \end{equation*}
    \begin{lem} \label{lem_newton}
        The map induced on affine space by $\phi_F$ is given by
        \begin{equation*}
            \tilde{\phi}_F(x) = x- d\frac{f(x)}{f'(x)}.
        \end{equation*}
    \end{lem}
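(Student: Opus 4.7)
The plan is to dehomogenize $\phi_F$ in the standard chart $Y \neq 0$ and then use Euler's relation to convert the resulting expression in $F_X, F_Y$ into one involving $F$ and $F_X$ alone.

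First I would write the dehomogenized map. Since $\phi_F(X,Y) = (F_Y, -F_X)$, the affine coordinate $x = X/Y$ is sent to
\begin{equation*}
    \tilde{\phi}_F(x) = \frac{F_Y(X,Y)}{-F_X(X,Y)} = -\frac{F_Y(X,Y)}{F_X(X,Y)}.
\end{equation*}
Note this ratio is well-defined as a rational function of $x$ because both $F_X$ and $F_Y$ are homogeneous of the same degree $d-1$, so the common factors of $Y$ cancel when we pass to the $x$ variable.

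Next I would apply Euler's identity. Since $F$ is homogeneous of degree $d$, we have $XF_X + YF_Y = dF$, which gives $F_Y = (dF - XF_X)/Y$. Substituting into the expression above yields
\begin{equation*}
    \tilde{\phi}_F(x) = -\frac{dF - XF_X}{Y F_X} = \frac{X}{Y} - \frac{dF}{Y F_X} = x - d\cdot\frac{F/Y^d}{F_X/Y^{d-1}}.
\end{equation*}

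Finally, using the identifications $f(x) = F(X,Y)/Y^d$ and $f'(x) = F_X(X,Y)/Y^{d-1}$ already recorded in the paper, the last expression is exactly $x - d\,f(x)/f'(x)$, as claimed. The whole argument is routine once Euler's relation is invoked; the only thing to be careful about is that $f'(x) \not\equiv 0$, which is guaranteed because $F$ has no multiple roots (so $F_X$ and $F_Y$ are not both zero, and hence $F_X$ is a non-trivial homogeneous polynomial of degree $d-1$ that is not identically zero after dehomogenization, since otherwise $F$ would be a power of $Y$, contradicting the simple root hypothesis when $d \geq 2$).
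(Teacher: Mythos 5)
Your proof is correct and follows essentially the same route as the paper: both dehomogenize $\phi_F$ to get $-F_Y/F_X$, apply Euler's relation $XF_X + YF_Y = dF$ to eliminate $F_Y$, and then rewrite in terms of $f = F/Y^d$ and $f' = F_X/Y^{d-1}$. The only difference is cosmetic (you substitute for $F_Y$ directly, while the paper multiplies numerator and denominator by $Y$ first), and your extra remark that $f' \not\equiv 0$ is a harmless bonus.
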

    \begin{proof}
        \begin{equation*}
             \tilde{\phi}_F(x) = -\frac{F_Y(X,Y)}{F_X(X,Y)} = -\frac{YF_Y(X,Y)}{YF_X(X,Y)} = \frac{XF_X(X,Y) - dF(X,Y)}{YF_X(X,Y)} = x-d\frac{f(x)}{f'(x)}.
        \end{equation*}
    \end{proof}
    \begin{defn}
        Let $\phi = (\phi_1,\phi_2):\P^1 \to \P^1$ be a rational map on $\P^1$. Define $\Res(\phi) = \Res(\phi_1,\phi_2)$, the \emph{resultant} of the coordinate functions of $\phi$. For a homogeneous polynomial $F$, denote $\Disc(F)$ for the \emph{discriminant} of $F$.
    \end{defn}
    \begin{prop}\label{prop_res_disc}
        Let $F(X,Y)$ be a homogeneous polynomial of degree $d$ with no multiple roots.  Then,
        \begin{equation*}
            \Res(\phi_F(X,Y)) = (-1)^{d(d-1)/2}d^{d-2}\Disc(F(X,Y)).
        \end{equation*}
    \end{prop}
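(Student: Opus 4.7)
The plan is to reduce $\Res(\phi_F)=\Res(F_Y,-F_X)$ to a univariate resultant using Lemma~\ref{lem_newton}, and then to invoke the classical relation between $\Res(f,f')$ and $\Disc(f)$. First, the standard symmetry and scaling identities for resultants give
\[
\Res(F_Y,-F_X)=(-1)^{d-1}\Res(F_Y,F_X)=(-1)^{(d-1)+(d-1)^2}\Res(F_X,F_Y)=\Res(F_X,F_Y),
\]
since $(d-1)d$ is always even. Thus the sign $(-1)^{d(d-1)/2}$ in the statement will have to appear through the discriminant step alone.

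Next I would dehomogenize. Set $f(x)=F(x,1)$ with leading coefficient $a_d$, and assume first that $a_d\neq 0$, so $\deg f=d$. Then $F_X(x,1)=f'(x)$ and, by Euler's identity, $F_Y(x,1)=df(x)-xf'(x)$. Under these assumptions both forms remain of degree $d-1$ after dehomogenization, so the homogeneous resultant coincides with the univariate one, $\Res(F_X,F_Y)=\Res_x(f',\,df-xf')$.

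The heart of the argument is the evaluation of this last resultant. Let $\beta_1,\ldots,\beta_{d-1}$ be the roots of $f'$. The product formula for resultants gives
\[
\Res_x(f',\,df-xf')=(da_d)^{d-1}\prod_j\bigl(df(\beta_j)-\beta_j f'(\beta_j)\bigr)=d^{d-1}(da_d)^{d-1}\prod_j f(\beta_j),
\]
because each $f'(\beta_j)=0$. The remaining product equals $\Res_x(f',f)/(da_d)^d=\Res_x(f,f')/(da_d)^d$ (the last equality because $(-1)^{(d-1)d}=1$). Substituting the classical identity $\Res_x(f,f')=(-1)^{d(d-1)/2}a_d\,\Disc(f)$ and simplifying, the factors of $a_d$ cancel and the powers of $d$ collapse to $d^{d-2}$, yielding $\Res(F_X,F_Y)=(-1)^{d(d-1)/2}d^{d-2}\Disc(F)$, which combined with the first step proves the proposition in the nondegenerate case.

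Finally, both sides of the identity are polynomials in the coefficients of $F$, so the generic computation above extends to the case $a_d=0$ either by Zariski density in the space of homogeneous polynomials of degree $d$, or by pre-composing $F$ with a suitable element of $\PGL_2$ that moves the root at infinity to a finite location (checking that both $\Res(\phi_F)$ and $\Disc(F)$ transform by matching powers of the determinant). I expect the main technical annoyance to be the degree-and-sign bookkeeping at the interface between homogeneous and affine resultants, rather than any genuinely hard step.
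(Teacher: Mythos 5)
Your proof is correct, but it takes a genuinely different route from the paper's. The paper stays entirely in the homogeneous setting: it applies Euler's identity \emph{before} taking any resultant to rewrite $\Res(F,F_X)$ as $\Res(YF_Y,-F_X)$, and then extracts the factor $da_d$ by a cofactor expansion of the Sylvester matrix along its first column, reducing directly to $\Res(F_Y,-F_X)$. You instead dehomogenize, use Euler to write $F_Y(x,1)=df-xf'$, and invoke the \emph{product formula} for resultants over the roots $\beta_j$ of $f'$, where the clever point is that $f'(\beta_j)=0$ kills the $xf'$ contribution and converts the product into $\prod f(\beta_j)=\Res(f',f)/(da_d)^d$. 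Both arguments are valid; yours avoids any explicit matrix manipulation and makes the role of Euler's identity very transparent, while the paper's avoids all of the dehomogenization bookkeeping and therefore never has to worry about degree drops or a density argument.

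One small inaccuracy in your write-up: the claim that "under these assumptions [$a_d\neq 0$] both forms remain of degree $d-1$ after dehomogenization" is not quite right. With $a_d\neq 0$ one has $\deg f'=d-1$, but the $x^{d-1}$-coefficient of $df-xf'$ is $a_{d-1}$, which can vanish independently; in that case the univariate resultant and the homogeneous (Sylvester) resultant of $F_X,F_Y$ no longer coincide without a correction factor. This does not break the proof — your closing Zariski-density argument absorbs it, since both sides are polynomial in the coefficients of $F$ and you only need the identity on the dense open locus $a_d a_{d-1}\neq 0$ — but you should either state that you are working on that locus, or phrase the non-degeneracy hypothesis more carefully before invoking density.
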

    \begin{proof}
        Denote $F(X,Y) = a_dX^d + a_{d-1}X^{d-1}Y + \cdots + a_0Y^d$. Then we have
        \begin{align*}
            F_X(X,Y) &= da_dX^{d-1} + \cdots + a_1Y^{d-1}\\
            F_Y(X,Y) &= a_{d-1}X^{d-1} + \cdots +da_0Y^{d-1}.
        \end{align*}
        From standard properties of resultants and discriminants we have
        \begin{align*}
            a_d\Disc(F(X,Y)) &= (-1)^{d(d-1)/2}\Res(F(X,Y),F_X(X,Y))\\
              &= (-1)^{d(d-1)/2}\frac{(-1)^{d}}{d^{d-1}}\Res(dF(X,Y),-F_X(X,Y))\\
              &= (-1)^{d(d-1)/2}\frac{(-1)^{d}}{d^{d-1}}\Res(XF_X(X,Y)+ YF_Y(X,Y),-F_X(X,Y))\\
              &= (-1)^{d(d-1)/2}\frac{(-1)^{d}}{d^{d-1}}\Res(YF_Y(X,Y),-F_X(X,Y)).
        \end{align*}
        Now we see that
        \begin{equation*}
            \Res(YF_Y,-F_X) = \abs{\begin{matrix}
              0 & a_{d-1} & 2a_{d-2} & \cdots & da_1 & 0\\
              0 & 0 & a_{d-1} & 2a_{d-2} & \cdots & da_1\\
              \vdots & & & & \vdots& \\
              -da_d & -(d-1)a_{d-1} & \cdots & -a_1 & 0 & 0\\
              0 & -da_d & -(d-1)a_{d-1} & \cdots & -a_1 & 0\\
              \vdots & & & & \vdots& \\
            \end{matrix}}.
        \end{equation*}
        Expanding down the first column we have
        \begin{align*}
            \Res&(YF_Y(X,Y),-F_X(X,Y))\\
             &= -da_n(-1)^{d+1} \abs{\begin{matrix}
              a_{d-1} & 2a_{d-2} & \cdots & da_1 & 0 &0\\
              0 & a_{d-1} & 2a_{d-2} & \cdots & da_1 &0\\
              \vdots & & & & \vdots &\\
              -da_d &-(d-1)a_{d-1} & \cdots & -a_1 & 0 & 0\\
               0 & -da_d &-(d-1)a_{d-1} & \cdots & -a_1 & 0\\
              \vdots & & & & \vdots &\\
            \end{matrix}}\\
            &= da_d(-1)^{d+2} R(F_Y(X,Y),-F_X(X,Y)).
        \end{align*}
        Thus, we compute
        \begin{align*}
            a_d\Disc(F(X,Y)) &= (-1)^{d(d-1)/2}\frac{(-1)^{d}}{d^{d-1}}\Res(YF_Y(X,Y),-F_X(X,Y))\\
              &= (-1)^{d(d-1)/2}\frac{(-1)^{d}}{d^{d-1}}(-1)^{d+2}da_n\Res(F_Y(X,Y),-F_X(X,Y))\\
              &= (-1)^{d(d-1)/2}\frac{a_d}{d^{d-2}}\Res(F_Y(X,Y),-F_X(X,Y)).
        \end{align*}
    \end{proof}
    \begin{rem}
        The similar relationship for flexible Latt\`es maps ($[m]$ for $m \in \Z$)
        \begin{equation*}
           \Disc(\Psi_{E,m-1}\Psi_{E,m+1}) = c \Res(\phi_{E,m}),
        \end{equation*}
        where $\Psi_{E,m}$ is the $m$-division polynomial and $\phi_{E,m}$ is the Latt\`es map induced by $[m]$, seems to not be currently known. Using conjectures on
        $\Disc(\Psi_{E,m})$ from \cite{Burhanuddin} and the formula for $\Res(\phi_{E,m})$ \cite[Exercise 6.23]{Silverman10} it appears that the exponent is correct and that constant should be
         \begin{equation*}
            c=\pm 2^a(m-1)^b(m+1)^c
         \end{equation*}
         for some integers $a,b,c$.  It would be interesting to determine the exact relation.
    \end{rem}
%
%
%
    \begin{defn}
        Let $P$ be a periodic point of period $n$ for $\tilde{\phi}$, then the \emph{multiplier} at $P$ is the value $(\tilde{\phi}^n)'(P)$.  If $P$ is the point at infinity, then we can compute the multiplier by first changing coordinates.
    \end{defn}
    \begin{thm} \label{thm_fixed}
        The fixed points of $\phi_F(X,Y)$ are the solutions to $F(X,Y)=0$, and the multipliers of the fixed points are $1-d$.
    \end{thm}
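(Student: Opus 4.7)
The proof splits into two parts: locating the fixed points, and computing their multipliers.

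For the fixed points, my plan is to translate the projective fixed-point equation $[F_Y(X,Y) : -F_X(X,Y)] = [X:Y]$ into the vanishing of the $2 \times 2$ determinant
\[
\det\begin{pmatrix} F_Y & -F_X \\ X & Y \end{pmatrix} = YF_Y + XF_X,
\]
and then apply the Euler identity $XF_X + YF_Y = dF$ to rewrite the condition as $dF(X,Y)=0$. Since the characteristic of $K$ does not divide $d$, this is equivalent to $F(X,Y)=0$. Conversely, any zero of $F$ is a smooth point of the hypersurface $F=0$ by the simple-roots hypothesis, so $\phi_F$ is defined there and the same computation shows it is fixed.

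For the multipliers at the affine fixed points, the plan is to use the expression $\tilde{\phi}_F(x) = x - d\,f(x)/f'(x)$ from Lemma \ref{lem_newton}. The quotient rule gives
\[
\tilde{\phi}_F'(x) = 1 - d\,\frac{f'(x)^2 - f(x)f''(x)}{f'(x)^2}.
\]
At an affine fixed point $x_0$ one has $f(x_0) = 0$ by the first part, and the simple-roots hypothesis guarantees $f'(x_0) \neq 0$, so the formula collapses to $\tilde{\phi}_F'(x_0) = 1-d$.

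The main obstacle is the fixed point at $[1:0]$, which occurs precisely when $F(1,0)=0$, i.e.\ when the leading coefficient of $F$ in $X$ vanishes; here the affine formula of Lemma \ref{lem_newton} does not apply directly. Rather than compute the derivative in a local chart $w = 1/x$ and wade through the resulting algebra, my plan is to invoke Theorem \ref{thm_family}: choose $\gamma \in \PGL_2$ sending $[1:0]$ to an affine point, so that $\gamma^{-1} \circ \phi_F \circ \gamma = \phi_{F'}$ for some polynomial $F'$ of the same degree and form. Since multipliers are invariant under conjugation, the multiplier of $\phi_F$ at $[1:0]$ equals the multiplier of $\phi_{F'}$ at the corresponding affine fixed point, which is $1-d$ by the previous paragraph. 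This uniformly handles all $d$ fixed points.
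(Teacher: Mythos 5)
Your treatment of the affine fixed points and their multipliers agrees with the paper: you identify the fixed-point locus with $dF=0$ via the Euler relation and compute the affine multipliers from $\tilde\phi_F(x)=x-d\,f(x)/f'(x)$ by the quotient rule, exactly as the paper does. The difficulty is your handling of the fixed point at $[1:0]$, which is circular. To apply Corollary \ref{cor_family} (or Theorem \ref{thm_family} directly) and conclude $\gamma^{-1}\circ\phi_F\circ\gamma=\phi_{F'}$, the conjugated map must satisfy the hypothesis of Theorem \ref{thm_family}, namely that \emph{all} $d$ of its fixed points have multiplier $1-d$. Conjugation preserves multipliers, so this is equivalent to already knowing that all $d$ multipliers of $\phi_F$ — including the one at $[1:0]$ — equal $1-d$, which is precisely the claim you are trying to establish. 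The dependency is not incidental: the paper's proof of Theorem \ref{thm_family} explicitly uses the first half of Theorem \ref{thm_fixed} (that the fixed points of $\tilde\phi_F$ are the zeros of $f$), and the proof of Corollary \ref{cor_family} feeds the multiplier values back into Theorem \ref{thm_family}. So Theorem \ref{thm_family} and Corollary \ref{cor_family} sit logically downstream of the statement you are proving.

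The paper avoids all of this with the fixed-point multiplier index formula $\sum_{i=1}^{d}\frac{1}{1-\lambda_i}=1$, after first ruling out $\lambda_i=1$: by Euler's relation the fixed-point polynomial $XF_X+YF_Y$ is a scalar multiple of $F$, so simple roots of $F$ give simple fixed points, hence no multiplier equals $1$. With the $d-1$ affine multipliers known to be $1-d$, the index relation forces the remaining multiplier to be $1-d$ as well, with no coordinate change needed. Your conjugation idea can be rescued if you prove the closure of the family $\{\phi_F\}$ under conjugation \emph{directly} — a chain-rule computation shows $\nabla(F\circ\gamma)=\gamma^{\mathsf T}\,(\nabla F)\circ\gamma$, and combined with the fact that $\gamma J\gamma^{\mathsf T}=(\det\gamma)J$ for $J=\bigl(\begin{smallmatrix}0&1\\-1&0\end{smallmatrix}\bigr)$ one gets $\phi_{F\circ\gamma}=\gamma^{-1}\circ\phi_F\circ\gamma$ projectively, essentially the same computation the paper alludes to in the proof of Theorem \ref{prop_auto} — but that argument must be supplied; routing through Theorem \ref{thm_family} is not available here.
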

    \begin{proof}
        The projective equality
        \begin{equation*}
            \phi(X,Y) = (X,Y)
        \end{equation*}
        is equivalent to
        \begin{equation*}
            YF_Y(X,Y) = -XF_X(X,Y).
        \end{equation*}
        Using the formula of Euler for homogeneous polynomials we then have
        \begin{equation*}
            XF_X(X,Y) + YF_Y(X,Y) = dF(X,Y) = 0.
        \end{equation*}
        Since $d$ is a nonzero integer the fixed points satisfy $F(X,Y) = 0$.

        To calculate the multipliers, we first examine the affine fixed points.  We take a derivative evaluated at a fixed point to see
        \begin{equation*}
            \tilde{\phi}_F'(x) = 1 - d\frac{f'(x)f'(x)-f(x)f''(x)}{(f'(x))^2} = 1-d\frac{f'(x)f'(x)}{(f'(x))^2} = 1-d.
        \end{equation*}
        If a fixed point has multiplier one, then it would have multiplicity at least 2 and, hence, would be at least a double root of $F$.  Since $F$ has no multiple roots, every multiplier is not equal to one.  Thus, to see that the multiplier at infinity (when it is fixed) is also $1-d$ we may use the relation \cite[Theorem 1.14]{Silverman10}
        \begin{equation}\label{eq_mult}
            \sum_{i=1}^d \frac{1}{1-\lambda_i} = 1.
        \end{equation}
    \end{proof}
    \begin{rem}
        If $\Char{K} \mid d$, then $\phi_F$ is the identity map.
        Let $F(X,Y) = a_dX^d + a_{d-1}X^{d-1}Y + \cdots + a_0Y^d$.  Then we have
        \begin{align*}
            F_X(X,Y) &= (d-1)a_{d-1}X^{d-1}Y + \cdots a_1Y^{d-1} = Y((d-1)a_{d-1}X^{d-1} + \cdots a_1Y^{d-2})\\
            F_Y(X,Y) &= a_{d-1}X^{d-1} + \cdots + (d-1)a_1Y^{d-2}X = X(a_{d-1}X^{d-1} + \cdots + (d-1)a_1Y^{d-2}).
        \end{align*}
        Since $-i \equiv d-i \pmod{d}$ for $0 \leq i \leq d$ we have that
        \begin{equation*}
            \phi_F(X,Y) = (F_Y,-F_X) = (XP(X,Y),YP(X,Y)) = (X,Y),
        \end{equation*}
        where $P(X,Y)$ is a homogeneous polynomial.
    \end{rem}
    We next show that maps of the form $\phi_F$ form a family in the moduli space of dynamical systems.  In other words, for every $\gamma \in \PGL_2$ and $\phi_F$, there exists a $G(X,Y)$ such that $\gamma^{-1} \circ \phi_F \circ \gamma = \phi_{G}$.  In fact, $G(X,Y)$ is the polynomial that results from allowing $\gamma^{-1}$ to act on $F$.
    \begin{thm}\label{thm_family}
      Every rational map $\phi:\P^1 \to \P^1$ of degree $d-1$ whose fixed points are $\{(a_1,b_1),\ldots,(a_d,b_d)\}$ all with multiplier $(1-d)$ is a map of the form $\phi_F(X,Y)=(F_Y(X,Y),-F_X(X,Y))$ for
      \begin{equation*}
        F(X,Y) = (b_1X-a_1Y)(b_2X-a_2Y)\cdots (b_dX-a_dY).
      \end{equation*}
    \end{thm}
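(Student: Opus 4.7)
The plan is to lift the given map $\phi$ to homogeneous coordinates and show directly that it coincides with $\phi_F$, where $F$ is the polynomial in the statement. By Theorem~\ref{thm_fixed} the map $\phi_F$ has fixed points exactly $\{P_i = (a_i, b_i)\}$ with multipliers all equal to $1-d$, matching the data of $\phi$; the task is therefore to show that this fixed-point data forces $\phi = \phi_F$.

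Lift $\phi$ to a pair $(\phi_1, \phi_2)$ of homogeneous polynomials of degree $d-1$. The ``fixed-point polynomial'' $X\phi_2 - Y\phi_1$ is homogeneous of degree $d$ vanishing precisely at the $d$ distinct points $P_i$, so it is a scalar multiple of $F$. Rescaling the lift, I may arrange
\begin{equation*}
    X\phi_2 - Y\phi_1 \;=\; -dF \;=\; -XF_X - YF_Y,
\end{equation*}
which matches the analogous identity for the lift $(F_Y, -F_X)$ of $\phi_F$ via Euler. Rearranging gives $X(\phi_2 + F_X) = Y(\phi_1 - F_Y)$, and since $\gcd(X,Y)=1$ in $K[X,Y]$, there is a homogeneous polynomial $C(X,Y)$ of degree $d-2$ with
\begin{equation*}
    \phi_1 = F_Y + XC, \qquad \phi_2 = -F_X + YC.
\end{equation*}
Thus $\phi$ and $\phi_F$ differ only by the ``radial'' correction $C\cdot(X,Y)$.

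To eliminate $C$, use the multiplier hypothesis. After a preliminary coordinate change if necessary, assume each $P_i = (a_i, 1)$ lies in the affine chart $Y = 1$, set $x = X/Y$, and apply the quotient rule to $\tilde\phi(x) = \phi_1(x,1)/\phi_2(x,1)$. Using $F(P_i) = 0$ together with the Euler identity $a_i F_{XX}(P_i) + F_{XY}(P_i) = (d-1)F_X(P_i)$, the derivative collapses to
\begin{equation*}
    \tilde\phi'(a_i) \;=\; \frac{(d-1)F_X(P_i) + C(P_i)}{C(P_i) - F_X(P_i)}.
\end{equation*}
Equating this to the prescribed multiplier $1 - d$ and clearing denominators yields $d\,C(P_i) = 0$, so $C(P_i) = 0$ for every $i$ (invoking the standing hypothesis $\Char K \nmid d$). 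A nonzero homogeneous polynomial of degree $d-2$ cannot have $d$ distinct zeros on $\P^1$, hence $C \equiv 0$ and $\phi = (F_Y, -F_X) = \phi_F$.

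The genuine content is the derivative computation in the third paragraph: while mechanical, it is where the Euler identity applied to $F_X$ produces the cancellation that forces $dC(P_i) = 0$, and thereby pins down the specific form $(F_Y, -F_X)$. A minor bookkeeping nuisance is dealing with fixed points at $(1,0)$; one can sidestep this either by repeating the same derivative calculation in the chart $X = 1$ or by a harmless preliminary $\PGL_2$-change of coordinates, so the essential step remains the divisibility producing $C$ and its subsequent vanishing.
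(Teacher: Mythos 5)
Your proof is correct, and it takes a genuinely different (and arguably cleaner) route than the paper's. The paper works affinely: it writes $\tilde\psi(x) = x - P(x)/Q(x)$, deduces $P = cf$ from the fixed points, uses the multiplier condition to get $Q(x_i) = \tfrac{c}{d}f'(x_i)$ at each fixed point, and then pins down $Q$ by a Vandermonde/Lagrange-interpolation argument; it has to split into cases according to whether $(1,0)$ is fixed, which changes the degrees and the number of interpolation nodes. Your version lifts to homogeneous coordinates and observes that matching the fixed-point polynomial $X\phi_2 - Y\phi_1 = -dF$ (via Euler) already determines the lift up to a radial correction $\phi = (F_Y, -F_X) + C\cdot(X,Y)$ with $\deg C = d-2$; the multiplier condition then forces $C(P_i)=0$ at all $d$ fixed points, and $d > d-2$ kills $C$ outright. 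This buys you a coordinate-free decomposition and a degree count that is uniform regardless of where the fixed points sit — the only residual chart dependence is in evaluating the multiplier at a possible fixed point $(1,0)$, which, as you note, is handled by working in the other chart. Both proofs use the same hypotheses in the same roles (fixed points determine $F$, multipliers close the gap, $\Char K \nmid d$ makes the final cancellation nondegenerate); the packaging differs.
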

    \begin{proof}
        Let $(a_1,b_1),\ldots,(a_d,b_d)$ be the collection of fixed points for the map $\psi(X,Y): \P^1 \to \P^1$ whose multipliers are $1-d$.  Then on $\A^1$ we may write the map of degree $d-1$ as
        \begin{equation*}
            \tilde{\psi}(x) = x- \frac{P(x)}{Q(x)}
        \end{equation*}
        for some pair of polynomials $P(x)$ and $Q(x)$ with no common zeros.
        Let $\tilde{\phi}_F(x)$ be the affine map associated to $F(X,Y) = (b_1X-a_1Y)\cdots (b_dX-a_dY)$ and we can write
        \begin{equation*}
            \tilde{\phi}_F(x) = x- d\frac{f(x)}{f'(x)}
        \end{equation*}
        where
        \begin{equation*}
            f(x) = \frac{F(X,Y)}{Y^d}.
        \end{equation*}
        The fixed points of $\tilde{\psi}(x)$ are the points where $\frac{P(x)}{Q(x)} = 0$ and, hence, where $P(x) = 0$.  The fixed points of $\tilde{\psi}(x)$ are the same as for $\tilde{\phi}_F(x)$, so we must have $P(x) = cf(x)$ for some nonzero constant $c$.  Using the fact that the multipliers are $1-d$, we get
        \begin{equation*}
            \tilde{\psi}'(x) = 1 - \frac{cf'Q - cQ'}{(Q')^2} = 1 - \frac{cf'}{Q} = 1-d.
        \end{equation*}
        Therefore, we know that
        \begin{equation*}
            \frac{c}{d}f'(x_i) = Q(x_i)
        \end{equation*}
        where $x_1,\ldots,x_d$ are the fixed points (or $x_1,\ldots,x_{d-1}$ if $(1,0) \in \P^1$ is a fixed point).
        Since $f'(x)$ and $Q(x)$ are both degree $d-1$ polynomials (or $d-2$), this is a system of $d$ (or $d-1$) equations in the $d$ (or $d-1$) coefficients of $Q(x)$.  Since the values $x_i$ are distinct (since the multipliers are $\neq 1$) the Vandermonde matrix is invertible and we get a unique solution for $Q(x)$.  In particular, we must have
        \begin{equation*}
            \frac{c}{d}f'(x) = Q(x)
        \end{equation*}
        and thus
        \begin{equation*}
            \tilde{\psi}(x) = \tilde{\phi}(x).
        \end{equation*}
    \end{proof}

    \begin{cor} \label{cor_family}
        The family of maps of the form $\phi_F(X,Y)=(F_Y(X,Y),-F_X(X,Y)):\P^1 \to \P^1$ is invariant under the conjugation action by $\PGL_2$.  In particular, the family of $\phi_F$ where $\deg{F(X,Y)} = d$ is isomorphic to an arbitrary choice of $d-3$ distinct points in $\P^1 - \{0,1,\infty\}$.
    \end{cor}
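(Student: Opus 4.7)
The plan is to deduce the $\PGL_2$-invariance directly from Theorem \ref{thm_family} and then count parameters. Fix $\gamma \in \PGL_2$ and let $(a_1,b_1),\dots,(a_d,b_d) \in \P^1$ denote the fixed points of $\phi_F$, which by Theorem \ref{thm_fixed} are exactly the zeros of $F$. The conjugate $\psi := \gamma^{-1}\circ\phi_F\circ\gamma$ is again a morphism of degree $d-1$, its fixed points are $\gamma^{-1}(a_i,b_i)$, and because multipliers at periodic points are invariant under $\PGL_2$-conjugation, each fixed-point multiplier of $\psi$ is still $1-d$. Theorem \ref{thm_family} then forces $\psi = \phi_G$ with
\begin{equation*}
G(X,Y) = \prod_{i=1}^{d} (b_i' X - a_i' Y), \qquad (a_i',b_i') = \gamma^{-1}(a_i,b_i).
\end{equation*}
Since $F$ itself is a scalar multiple of $\prod_i(b_iX-a_iY)$, this $G$ is, up to scalar, the pullback of $F$ by $\gamma$, proving invariance.

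For the moduli statement, I would first observe that rescaling $F$ by a nonzero constant leaves $\phi_F$ unchanged, so the assignment $F \mapsto \phi_F$ factors through the unordered set of roots of $F$ in $\P^1$; Theorem \ref{thm_fixed} recovers this set as $\Fix(\phi_F)$, making the assignment a bijection onto the collection of $d$-element subsets of $\P^1$. The invariance from the previous paragraph identifies $\PGL_2$-conjugation on maps with the diagonal $\PGL_2$-action on $d$-point configurations in $\P^1$. Since $\PGL_2$ acts simply $3$-transitively on $\P^1$, after sending three of the roots to $0,1,\infty$ we are left with a $(d-3)$-tuple of distinct points in $\P^1\setminus\{0,1,\infty\}$ that may be chosen arbitrarily, yielding the claimed parametrization.

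The only ingredient requiring any real verification is the standard chain-rule fact that multipliers are $\PGL_2$-conjugation invariant; once that is in hand, both assertions reduce to Theorem \ref{thm_family} together with the sharp $3$-transitivity of $\PGL_2$ on $\P^1$. The main obstacle (if any) is purely bookkeeping: making sure one works up to a global rescaling of $F$ so that the bijection between $\phi_F$ and the unordered root set is clean.
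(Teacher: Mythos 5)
Your proof is correct and follows essentially the same route as the paper: conjugation preserves multipliers and permutes fixed points, so Theorem \ref{thm_family} shows the conjugate is again of the form $\phi_G$, and the parametrization by $d-3$ points follows from sharp $3$-transitivity of $\PGL_2$ on $\P^1$. You simply spell out the bookkeeping (identification of $G$ with the pullback of $F$, and the bijection between $\phi_F$ and unordered root sets) more explicitly than the paper does.
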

    \begin{proof}
        Conjugation fixes the multipliers and moves the fixed points, so by Theorem \ref{thm_family} the conjugated map is of the same form.

        A map of degree $d-1$ on $\P^1$ has $d$ fixed points.  The action by $\PGL_2$ can move any 3 distinct points to any 3 distinct points.  Thus, the choice of the remaining $d-3$ fixed points determines $\phi_F$.
    \end{proof}
    \subsection{Extended Example}
        \begin{prop} \label{prop_deg4_form}
            Let $F(X,Y)$ be a degree $4$ homogeneous polynomial with no multiple roots with associated morphism $\phi_F(X,Y)$.  For any $\alpha \in \overline{\Q} - \{0,1\}$ we have that $\phi_F(X,Y)$ is conjugate to a map of the form
            \begin{equation*}
                \phi_{F,\alpha}(X,Y) = (X^3 - 2(\alpha+1)X^2Y + 3\alpha XY^2, -3X^2Y + 2(\alpha+1)XY^2 - \alpha Y^3).
            \end{equation*}
        \end{prop}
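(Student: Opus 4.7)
The plan is to reduce to a normal form via conjugation and then carry out a direct computation. By Theorem \ref{thm_fixed}, $\phi_F$ has four distinct fixed points (the roots of $F$), each with multiplier $1-d = -3$. Since $\PGL_2$ acts $3$-transitively on $\P^1$, I would choose $\gamma \in \PGL_2$ sending three of these fixed points to $0$, $1$, $\infty$; let $\alpha$ denote the image of the remaining fixed point. Because the four roots of $F$ are distinct, so are their images, hence $\alpha \in \overline{\Q} \setminus \{0,1\}$. By Corollary \ref{cor_family}, the conjugate $\gamma^{-1} \circ \phi_F \circ \gamma$ is again of the form $\phi_G$ for some degree-$4$ $G$.

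Since multipliers are conjugation invariant, $\gamma^{-1} \circ \phi_F \circ \gamma$ has fixed-point set $\{(0,1),(1,1),(1,0),(\alpha,1)\}$ with every multiplier equal to $1-d$. Theorem \ref{thm_family} then identifies $G$ uniquely as the product of the linear forms vanishing at these fixed points:
$$G(X,Y) = X \cdot (X-Y) \cdot (-Y) \cdot (X - \alpha Y) = -XY(X-Y)(X - \alpha Y).$$
Expanding gives $G(X,Y) = -X^3 Y + (\alpha+1)X^2 Y^2 - \alpha X Y^3$, whence
$$G_X = -3X^2 Y + 2(\alpha+1) X Y^2 - \alpha Y^3, \qquad G_Y = -X^3 + 2(\alpha+1) X^2 Y - 3\alpha X Y^2.$$
Forming $\phi_G = (G_Y, -G_X)$ and multiplying both coordinates by $-1$ (a projectively invisible change) produces exactly $\phi_{F,\alpha}$.

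I do not anticipate a genuine obstacle: the proposition is essentially the explicit $d=4$ unpacking of Corollary \ref{cor_family}, with the single moduli parameter $\alpha$ read off as the position of the fourth fixed point after normalizing the other three to $\{0,1,\infty\}$. The only point worth being careful about is verifying that $\alpha$ really lies in $\overline{\Q} \setminus \{0,1\}$ (and automatically $\alpha \neq \infty$ since $\infty \notin \overline{\Q}$), which follows immediately from the simplicity of the roots of $F$.
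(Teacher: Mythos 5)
Your proof is correct and takes essentially the same approach as the paper: normalize three of the four fixed points to $\{0,1,\infty\}$ by an element of $\PGL_2$, call the fourth $\alpha$, invoke the invariance of the family (Theorem~\ref{thm_family}/Corollary~\ref{cor_family}) to know the conjugate is again some $\phi_G$, write $G$ as the product of the linear forms vanishing at the four fixed points, and differentiate. The only cosmetic difference is that you follow the sign convention of Theorem~\ref{thm_family} literally, producing $G = -XY(X-Y)(X-\alpha Y)$, whereas the paper simply takes $F = XY(X-Y)(X-\alpha Y)$; this is a projectively invisible scalar, as you note.
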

        \begin{proof}
            We can move three of the 4 fixed points to $\{0,1,\infty\}$ with an element of $\PGL_2$ and label the fourth fixed point as $\alpha$.  Then we have
            \begin{equation*}
                F(X,Y,\alpha) = (X)(Y)(X-Y)(X-\alpha Y) = X^3Y -(\alpha+1) X^2Y^2 + \alpha XY^3
            \end{equation*}
            and
            \begin{align*}
                \phi_{F,\alpha}(X,Y) &= (F_Y(X,Y,\alpha),-F_X(X,Y,\alpha))\\
                &= (X^3 - 2(\alpha+1)X^2Y + 3\alpha XY^2, -(3X^2Y - 2(\alpha+1)XY^2 + \alpha Y^3)).
            \end{align*}
        \end{proof}
        \begin{prop}
            Let $F(X,Y)$ be a degree $4$ homogeneous polynomial with no multiple roots with associated morphism $\phi_F(X,Y)$.  Assume that $\phi_F(X,Y)$ is in the form of Proposition \ref{prop_deg4_form}.  Then, the two periodic points are of the form
            \begin{equation*}
                \{\pm \sqrt{\alpha}, 1 \pm \sqrt{1 - \alpha}, \alpha \pm \sqrt{\alpha^2 -\alpha}\} \cup \{0,1,\infty, \alpha\}.
            \end{equation*}
        \end{prop}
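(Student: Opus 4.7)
The plan is to use Lemma \ref{lem_newton} to pass to an affine computation, verify directly that the six proposed non-fixed points form three length-$2$ orbits, and close with a counting argument to ensure nothing is missed.

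First, since $F(X,Y,\alpha)=XY(X-Y)(X-\alpha Y)$ vanishes at $(1,0)$, dehomogenizing yields the cubic $f(x)=x(x-1)(x-\alpha)$, and by Lemma \ref{lem_newton} the affine map is
\begin{equation*}
\tilde{\phi}_{F,\alpha}(x) \;=\; x - 4\frac{f(x)}{f'(x)} \;=\; \frac{-x^3 + 2(\alpha+1)x^2 - 3\alpha x}{3x^2 - 2(\alpha+1)x + \alpha}.
\end{equation*}
The fixed points $\{0,1,\alpha,\infty\}$ are immediate from Theorem \ref{thm_fixed}, so the substantive work concerns the other six points.

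For each proposed pair $\{p,q\}$ I would exploit the fact that its elements are the roots of an explicit monic quadratic $x^2 - sx + pq$, and verify the single identity $x \cdot \tilde{\phi}_{F,\alpha}(x) \equiv pq \pmod{x^2 - sx + pq}$. This congruence forces $\tilde{\phi}_{F,\alpha}(p)=pq/p=q$ and $\tilde{\phi}_{F,\alpha}(q)=pq/q=p$, making $\{p,q\}$ a $2$-cycle. Concretely, one checks $x\tilde{\phi}_{F,\alpha}(x)\equiv -\alpha \pmod{x^2-\alpha}$ for $\{\pm\sqrt{\alpha}\}$, next $x\tilde{\phi}_{F,\alpha}(x)\equiv \alpha \pmod{x^2-2x+\alpha}$ for $\{1\pm\sqrt{1-\alpha}\}$, and finally $x\tilde{\phi}_{F,\alpha}(x)\equiv \alpha \pmod{x^2-2\alpha x+\alpha}$ for $\{\alpha\pm\sqrt{\alpha^2-\alpha}\}$. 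Each case is a routine polynomial reduction: one multiplies the numerator of $\tilde{\phi}_{F,\alpha}$ by $x$, subtracts the target constant times the denominator, and confirms that the resulting quartic lies in the ideal generated by the defining quadratic.

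To see nothing is missed, observe that $\tilde{\phi}_{F,\alpha}$ has degree $d-1=3$, so its second iterate has degree $9$ and admits $9+1=10$ period-$2$ points on $\P^1$ counted with multiplicity. By Theorem \ref{thm_fixed} the four fixed points have multiplier $1-d=-3\ne 1$, hence are simple and account for four of the ten, leaving exactly six for the three verified $2$-cycles. The main obstacle is bookkeeping in the middle step: three separate quadratic reductions must be carried out, and one must be careful about the asymmetry $pq = -\alpha$ for the first pair versus $pq = \alpha$ for the other two. A slightly more uniform presentation would be to compute $\tilde{\phi}_{F,\alpha}^{\,2}(x)-x$ as a single rational function, factor out the fixed-point polynomial $f(x)(x-\infty$ contribution$)$, and factor the resulting degree-$6$ numerator into the three explicit quadratics above, but the orbit-by-orbit verification is shorter and avoids expanding a degree-$9$ composition.
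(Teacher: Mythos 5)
Your proposal is correct, and it amounts to a well-organized version of the paper's terse ``Direct computation'': you derive $\tilde\phi_{F,\alpha}$ from Lemma~\ref{lem_newton}, verify each claimed $2$-cycle $\{p,q\}$ via the single congruence $x\,\tilde\phi_{F,\alpha}(x)\equiv pq\pmod{x^2-sx+pq}$ (which does reduce cleanly to $-\alpha$, $\alpha$, $\alpha$ modulo $x^2-\alpha$, $x^2-2x+\alpha$, $x^2-2\alpha x+\alpha$, respectively), and then close with the count $\deg\phi^2+1=10$ of period-dividing-$2$ points together with the fact that the four fixed points are simple for $\phi^2$ since $(1-d)^2=9\neq 1$. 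The only point left implicit is that the ten listed points are pairwise distinct; a short check shows any coincidence forces $\alpha\in\{0,1\}$, which Proposition~\ref{prop_deg4_form} excludes, so the count does apply and the argument is complete.
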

        \begin{proof}
            Direct computation.
        \end{proof}
        \begin{prop}
            $\Q$-Rational affine two periodic points are parameterized by pythagorean triples.
        \end{prop}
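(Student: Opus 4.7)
The plan is to combine the explicit description of the six strictly $2$-periodic affine points from the previous proposition with an analysis of when their coordinates lie in $\Q$. Those six points split into three Galois orbits of size two, namely $\{\pm\sqrt{\alpha}\}$, $\{1 \pm \sqrt{1-\alpha}\}$, and $\{\alpha \pm \sqrt{\alpha^2-\alpha}\}$, with respective discriminants $D_1 = \alpha$, $D_2 = 1-\alpha$, $D_3 = \alpha(\alpha-1)$. A given orbit consists of $\Q$-rational points exactly when the corresponding $D_i$ is a square in $\Q^{\times}$.

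I would first record the multiplicative identity $D_1 \cdot D_2 \cdot D_3 = -\alpha^2(1-\alpha)^2$, so for $\alpha \in \Q \setminus \{0,1\}$ the product of the three discriminants is $-1$ times a nonzero rational square. Since $-1$ is not a square in $\Q$, not all three $D_i$ can simultaneously be squares; indeed, any two being squares forces the third to be $-1$ times a square, hence non-square. So the maximal case is to have exactly two orbits $\Q$-rational.

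The key step is to identify that maximal case with the Pythagorean parameterization, focusing on the pair $(D_1, D_2)$ which already satisfies $D_1 + D_2 = 1$. If both $\alpha$ and $1-\alpha$ are rational squares, write them with a common denominator as $\alpha = a^2/c^2$ and $1-\alpha = b^2/c^2$; clearing the identity $\alpha + (1-\alpha) = 1$ gives $a^2 + b^2 = c^2$, which is exactly the Pythagorean relation. Conversely, any Pythagorean triple $(a,b,c)$ with $abc \neq 0$ produces $\alpha = a^2/c^2 \in \Q \setminus \{0,1\}$ together with the four rational affine $2$-periodic points $\pm a/c$ and $1 \pm b/c$. Modding out by the sign ambiguities of $a, b$ (which merely permute points within an orbit) and by the common scaling of the triple (which does not change $\alpha$) then gives the claimed bijective parameterization.

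The main obstacle is essentially bookkeeping: one should check that the two other two-square possibilities ($D_1, D_3$ both squares, or $D_2, D_3$ both squares) give instead rational points on a hyperbola rather than on the circle $a^2+b^2=c^2$, so that it is specifically the $(D_1, D_2)$ combination, singled out by the affine identity $D_1 + D_2 = 1$, which yields the Pythagorean parameterization asserted in the proposition.
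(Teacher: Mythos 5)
Your core derivation --- writing $\alpha = p^2/q^2$ in lowest terms and observing $q^2 - p^2 = r^2$ --- is exactly the paper's argument, which likewise opens by positing that $\alpha$ and $1-\alpha$ are both rational squares (hence $0 < \alpha < 1$) and extracts the relation $p^2 + r^2 = q^2$. What you add, and what the paper omits, is the justification of that opening assumption: the identity $D_1 D_2 D_3 = -\alpha^2(1-\alpha)^2$ showing at most two of the three Galois orbits can simultaneously be $\Q$-rational, together with the observation that the other two maximal cases, $(D_1,D_3)$ and $(D_2,D_3)$, lead to the hyperbola $s^2 - t^2 = 1$ rather than to Pythagorean triples. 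That last point is not merely bookkeeping: it shows the proposition is imprecise as stated. For example $\alpha = 25/16$ yields four $\Q$-rational $2$-periodic points $\{\pm 5/4,\ 5/2,\ 5/8\}$ with $D_1, D_3$ both squares and $D_2 = -9/16$ negative; this configuration is parameterized by a rational point on the hyperbola, not by a Pythagorean triple. The paper's proof tacitly restricts to the $(D_1,D_2)$ case by assuming both $\alpha$ and $1-\alpha$ are squares; your analysis makes that restriction, and its necessity for the Pythagorean conclusion, explicit. Both proofs establish the same correspondence on that restricted locus, but yours is more careful about what the proposition is actually parameterizing.
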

        \begin{proof}
            The values $\alpha$ and $1-\alpha$ are both squares and $0 < \alpha < 1$.  Thus, there are relatively prime integers $p$ and $q$ so that $\alpha = \frac{p^2}{q^2}$ with $p < q$ and $1- \alpha = \frac{q^2-p^2}{q^2}$.  Therefore, $r^2 + p^2 = q^2$ is a pythagorean triple, with $r^2 = (1-\alpha)q^2$.
        \end{proof}
        \begin{rem}
            The 2-periodic points are not the roots of $f(\tilde{\phi}(x))$, see Theorem \ref{thm_higher_periodic} for the general relation.

            For general $F(X,Y)$, $\phi_{F}^2(X,Y)$ does not come from a homogeneous polynomial $G$.
        \end{rem}

    \subsection{Higher order periodic points}
        We set the following notation
        \begin{align*}
            f(x) &= \frac{F(X,Y)}{Y^d} = \sum_{i=0}^{d-1} a_ix^i\\
            \tilde{\phi}^n(x) &= \frac{A_n(x)}{B_n(x)}\\
            c_n &= -\frac{B_{n+1}(x)}{F_X(A_n(x),B_n(x))}
        \end{align*}
        where $A_n(x)$ and $B_n(x)$ are polynomials and $c_n$ is a constant.
        \begin{defn}
            Let $\Psi_n(x)$ be the polynomial whose zeros are affine $n$-periodic points.

            The polynomial $\Psi_n(x)$ is the equivalent of the $n$-division polynomial for elliptic curves, see \cite[Chapter 2]{lang3} for information on division polynomials.
        \end{defn}
         While it is possible, to define $\Psi_n(x)$ recursively, the relation is not as simple as for elliptic curves. If we let $\Psi_{E,m}$ be the $m$-division polynomial for an elliptic curve $E$, then
         \begin{align*}
            \Psi_{E,2m+1} &= \Psi_{E,m+2}\Psi_{E,m}^3 - \Psi_{E,m-1}\Psi_{E,m+1}^3 \quad \text{for } m \geq 2\\
            \Psi_{E,2m} &= \left(\frac{\Psi_{E,m}}{2y}\right)(\Psi_{E,m+2}\Psi_{E,m-1}^2 - \Psi_{E,m-2}\Psi_{E,m+1}^2) \quad \text{for } m \geq 3.
         \end{align*}
         Notice that these relations depend only on $\Psi_{E,m}$ for various $m$, whereas the formula in the following theorem also involves iterates of the map.
        \begin{thm} \label{thm_higher_periodic}
            We have the following formulas
            \begin{equation*}
                \tilde{\phi}^n(x) = x + d\frac{\Psi_n(x)}{B_n(x)}
            \end{equation*}
            and
            \begin{equation*}
                \Psi_{n+1}(x) =\frac{F(A_n(x),B_n(x)) - \Psi_n(x)F_X(A_n(x),B_n(x))}{B_n(x)c_n}
            \end{equation*}
            with multipliers
            \begin{equation*}
                \prod_{i=0}^{n-1}\left(1-d + d\frac{f(\phi^i(x))f''(\phi^i(x))}{f'(\phi^i(x))^2}\right).
            \end{equation*}
        \end{thm}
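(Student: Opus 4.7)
The plan is to reduce all three assertions to iteration of the Newton-like identity $\tilde{\phi}(x) = x - d f(x)/f'(x)$ from Lemma \ref{lem_newton}, together with the chain rule and the homogeneity relations $f(A/B) = F(A,B)/B^d$ and $f'(A/B) = F_X(A,B)/B^{d-1}$ coming from $f(x) = F(X,Y)/Y^d$.

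I would handle the first two formulas together by induction on $n$. The base case $n=1$ follows directly from Lemma \ref{lem_newton}: rewriting $\tilde{\phi}(x) - x = -df(x)/f'(x) = d(-f(x))/f'(x)$ gives $\Psi_1(x) = -f(x)$ and $B_1(x) = f'(x)$, and the zero set of $\Psi_1$ recovers Theorem \ref{thm_fixed}. For the inductive step I would compute
\begin{align*}
    \tilde{\phi}^{n+1}(x) - x &= \tilde{\phi}(\tilde{\phi}^n(x)) - x = \bigl(\tilde{\phi}^n(x) - x\bigr) - d\frac{f(\tilde{\phi}^n(x))}{f'(\tilde{\phi}^n(x))} \\
    &= \frac{d\Psi_n(x)}{B_n(x)} - \frac{d\, F(A_n(x), B_n(x))}{B_n(x)\, F_X(A_n(x), B_n(x))} \\
    &= \frac{d\bigl(\Psi_n(x) F_X(A_n, B_n) - F(A_n, B_n)\bigr)}{B_n(x)\, F_X(A_n, B_n)},
\end{align*}
using the homogeneity relations to pass from $f$ and $f'$ to $F$ and $F_X$. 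Identifying the right-hand side with $d\Psi_{n+1}(x)/B_{n+1}(x)$ and substituting $B_{n+1}(x) = -c_n F_X(A_n(x), B_n(x))$ from the definition of $c_n$ yields the recursion for $\Psi_{n+1}$ stated in the theorem.

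For the multiplier formula, the chain rule gives $(\tilde{\phi}^n)'(x) = \prod_{i=0}^{n-1} \tilde{\phi}'(\tilde{\phi}^i(x))$, so it suffices to compute $\tilde{\phi}'$ once. Direct differentiation of the Newton-like form yields
\[
\tilde{\phi}'(x) = 1 - d\frac{f'(x)^2 - f(x) f''(x)}{f'(x)^2} = 1 - d + d\frac{f(x) f''(x)}{f'(x)^2},
\]
and taking the product over $i=0,\ldots,n-1$ produces the stated formula (and incidentally recovers the multiplier $1-d$ at fixed points of $\tilde{\phi}$, consistent with Theorem \ref{thm_fixed}, since $f$ vanishes there).

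The main obstacle is the bookkeeping surrounding the scalar $c_n$. The displayed computation expresses $\tilde{\phi}^{n+1}(x) - x$ as a fraction with denominator $B_n(x)\, F_X(A_n(x), B_n(x))$, whereas $B_{n+1}(x)$ is by definition the denominator of $\tilde{\phi}^{n+1}$ in the chosen normalization. One must verify that the cancellation between this denominator and the corresponding numerator is captured by the single scalar factor $c_n$ rather than by a nontrivial polynomial factor, and that the leading-coefficient normalization is chosen so that $\Psi_{n+1}$ comes out as a genuine polynomial with zero set exactly equal to the affine $(n+1)$-periodic locus. This is where care must be taken in pinning down $c_n$ explicitly, and it is the step most analogous to the classical recursion for division polynomials that the remark preceding the theorem highlights as a point of contrast.
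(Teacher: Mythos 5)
Your structure tracks the paper's argument closely: the base case from Lemma \ref{lem_newton}, the inductive passage from $\tilde{\phi}^n$ to $\tilde{\phi}^{n+1}$ via the homogeneity relations $f(A_n/B_n) = F(A_n,B_n)/B_n^d$ and $f'(A_n/B_n) = F_X(A_n,B_n)/B_n^{d-1}$, and the chain-rule computation for the multiplier all match. The sign normalization $\Psi_1 = -f$, $B_1 = f'$ differs from the paper's $\Psi_1 = f$, $B_1 = -f'$, but the ratio is what matters and the discrepancy is absorbed into $c_n$, so that is harmless.

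The genuine gap is precisely the step you flag at the end and then leave unresolved. You observe correctly that the inductive computation produces $\tilde{\phi}^{n+1}(x) - x$ as a fraction with denominator $B_n F_X(A_n,B_n)$, and that identifying this with $d\Psi_{n+1}/B_{n+1}$ requires the factor $B_n$ in the denominator to cancel against the numerator up to a scalar. But you do not verify that this cancellation actually occurs, and without it the formula for $\Psi_{n+1}$ is not even a polynomial, let alone the polynomial whose zero set is the affine $(n+1)$-periodic locus. The paper supplies exactly the two ingredients you are missing. First, a divisibility check: from the inductive identity $\tilde{\phi}^n = A_n/B_n = x + d\Psi_n/B_n$ one gets $A_n = xB_n + d\Psi_n$, hence $\Psi_n \equiv A_n/d \pmod{B_n}$; combined with $F(A_n,B_n) \equiv a_d A_n^d$ and $F_X(A_n,B_n) \equiv d a_d A_n^{d-1} \pmod{B_n}$, this yields $F(A_n,B_n) - \Psi_n F_X(A_n,B_n) \equiv 0 \pmod{B_n}$, so the recursion does produce a polynomial. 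Second, a degree count: $\deg F(A_n,B_n) = d(d-1)^n$ and $\deg\bigl(\Psi_n F_X(A_n,B_n)\bigr) \leq (d-1)^n + 1 + (d-1)^{n+1}$, so after dividing by $B_n$ one gets $\deg\Psi_{n+1} \leq (d-1)^{n+1}+1$, which matches the number of projective $(n+1)$-periodic points of a degree $(d-1)$ map and rules out any extraneous polynomial factor beyond the scalar $c_n$. Without the congruence $\Psi_n \equiv A_n/d \pmod{B_n}$ in particular, there is no reason the numerator should be divisible by $B_n$, so this is not a bookkeeping detail but the load-bearing observation of the proof.
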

        \begin{proof}
            We proceed inductively.  For $n=1$ we know that the fixed points are the zeros of $f(x)$.
            \begin{equation*}
                \tilde{\phi}(x) = x - d\frac{f(x)}{f'(x)} = x - d\frac{f(x)}{F_X(A_0(x),B_0(x))} = x - d\frac{f(x)}{-B_1(x)} = x + d\frac{\Psi_1(x)}{B_1(x)}.
            \end{equation*}
            Now assume that
            \begin{equation*}
                 \tilde{\phi}^n(x) = x + d\frac{\Psi_n(x)}{B_n(x)}.
            \end{equation*}
            Computing
            \begin{align*}
                \tilde{\phi}^{n+1}(x) &= x + d\frac{\Psi_n(x)}{B_n(x)} - d\frac{f(\tilde{\phi}^n(x))}{f'(\tilde{\phi}^n(x))}\\
                &= x + d\frac{\Psi_n(x)}{B_n(x)} - d\frac{F(A_n(x),B_n(x))}{B_n(x)F_X(A_n(x),B_n(x))}\\
                &= x - d\frac{F(A_n(x),B_n(x)) - \Psi_n(x)F_X(A_n(x),B_n(x))} {B_n(x)F_X(A_n(x),B_n(x))}\\
                &= x - d\frac{F(A_n(x),B_n(x)) - \Psi_n(x)F_X(A_n(x),B_n(x))} {c_nB_n(x)B_{n+1}(x)}.
            \end{align*}
            So we have to show that $B_n(x)$ divides $F(A_n(x),B_n(x)) - \Psi_n(x)F_X(A_n(x),B_n(x))$.  Working modulo $B_n(x)$ we see that
            \begin{align*}
                F(A_n(x),B_n(x)) - \Psi_n(x)F_X(A_n(x),B_n(x)) \equiv A_n(x)^d - (A_n(x)/d)dA_n(x)^{d-1} \equiv 0 \pmod{B_n(x)}
            \end{align*}
            where we used the induction assumption for $\Psi_n(x)$.  Thus, the $n$-periodic points are among the roots of $\Psi_{n}(x)$.

            For equivalence, we count degrees.  Again, proceeding inductively it is clear for $n=1$. For $n+1$ we have that
            \begin{equation*}
                \deg(F(A_n(x),B_n(x)) = d(d-1)^n = (d-1)^{n+1} + (d-1)^n
            \end{equation*}
            and
            \begin{equation*}
                \deg(\Psi_n(x)F_X(A_n(x),B_n(x))) \leq  (d-1)^n + 1 + (d-1)^{n+1}
            \end{equation*}
            depending on whether the point at infinity is periodic or not.  Thus,
            \begin{equation*}
                \deg(\Psi_{n+1}(x))  \leq (d-1)^n + 1 + (d-1)^{n+1} - (d-1)^n = (d-1)^{n+1} +1.
            \end{equation*}
            Since the number of (projective) periodic points of $\phi^n$ is $(d-1)^{n}+1$, every affine fixed point must be a zero of $\Psi_n(x)$.

            We compute the multipliers as
            \begin{align*}
                \tilde{\phi}'(x) &= 1-d\frac{f'(x)^2 - f(x)f''(x)}{f'(x)^2} = 1-d + d\frac{f(x)f''(x)}{f'(x)^2}\\
                (\tilde{\phi}^n(x))' &= \prod_{i=0}^{n-1} \tilde{\phi}'(\tilde{\phi}^i(x)) = \prod_{i=0}^{n-1}\left(1-d + d\frac{f(\tilde{\phi}^i(x))f''(\tilde{\phi}^i(x))}{f'(\tilde{\phi}^i(x))^2}\right).
            \end{align*}
        \end{proof}

%
%
%
%
%
%
%

    \subsection{Replace $d$ with $r$: Modified Newton-Raphson Iteration}
        We have considered maps of the form
        \begin{equation*}
            \tilde{\phi}_F(x) = x - d\frac{f(x)}{f'(x)}
        \end{equation*}
        where $d = \deg(F(X,Y))$.  However, we could also consider affine maps of the form
        \begin{equation}\label{eq_NR}
            \tilde{\phi}(x) = x - r\frac{f(x)}{f'(x)}
        \end{equation}
        for some $r \neq 0$ and polynomial $f(x)$.  When used for iterated root finding, such maps are often called the modified Newton-Raphson method.  The fixed points are again the zeros of $f(x)$ and are all distinct with multipliers $1-r$.  Thus, if $\deg{f} \neq r$, then the point at infinity must also be a fixed point by (\ref{eq_mult}) with multiplier
        \begin{align*}
            \sum_{i=1}^{d+1} \frac{1}{1-\lambda_i} &= \frac{\deg{f(x)}}{r} + \frac{1}{1-\lambda_{\infty}} = 1\\
            \lambda_{\infty} &= \frac{\deg{f(x)}}{\deg{f(x)}-r}.
        \end{align*}
        These maps also form a family in the moduli space of dynamical systems and are determined by their fixed points..
        \begin{thm} \label{thm_modified_newton}
            Let $r$ be a non-zero integer.  Every rational map $\phi:\P^1 \to \P^1$ of degree $d-1$ which has $d-1$ affine fixed points all with multiplier $(1-r)$ and fixes $(1,0)$ with multiplier $\frac{d-1}{d-r-1}$ is a map of the form (\ref{eq_NR}).
        \end{thm}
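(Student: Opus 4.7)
The strategy is a direct generalization of the proof of Theorem \ref{thm_family}, with the additional input that the multiplier at infinity now controls the degree of the denominator. Write $\tilde{\phi}(x) = x - P(x)/Q(x)$ with $P$ and $Q$ coprime polynomials. The $d-1$ affine fixed points $x_1, \ldots, x_{d-1}$ are exactly the zeros of $P(x)$, so setting $f(x) = \prod_{i=1}^{d-1}(x - x_i)$ we have $P(x) = c\, f(x)$ for some nonzero constant $c$. Computing the derivative at a fixed point gives
\begin{equation*}
    \tilde{\phi}'(x_i) = 1 - \frac{P'(x_i)Q(x_i) - P(x_i)Q'(x_i)}{Q(x_i)^2} = 1 - \frac{cf'(x_i)}{Q(x_i)} = 1 - r,
\end{equation*}
so $(c/r)\, f'(x_i) = Q(x_i)$ for $i = 1, \ldots, d-1$.

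The next step is to pin down the degree of $Q$ using the behavior at infinity. Since $P$ has degree $d-1$ and infinity is fixed with finite nonzero multiplier, the numerator $xQ(x) - P(x)$ and the denominator $Q(x)$ must have degrees $d-1$ and $d-2$, respectively (any smaller degree for $Q$ would make infinity a superattracting type fixed point with multiplier $1$, and larger would push it to zero or destroy the map's degree). Writing $P$ and $Q$ with leading coefficients and expanding $\tilde{\phi}(x) \sim (1 - \text{lead}(P)/\text{lead}(Q))\, x$ at infinity, the multiplier condition $\lambda_\infty = (d-1)/(d-r-1)$ forces $\text{lead}(P)/\text{lead}(Q) = r/(d-1)$, which is precisely consistent with $Q(x) = (c/r) f'(x)$ at the leading term.

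Now I apply the Vandermonde argument from Theorem \ref{thm_family}: both $(c/r) f'(x)$ and $Q(x)$ are polynomials of degree $d-2$, so each has $d-1$ coefficients, and the $d-1$ conditions $(c/r) f'(x_i) = Q(x_i)$ at the distinct nodes $x_i$ determine them uniquely. Hence $Q(x) = (c/r) f'(x)$ identically, and substituting back yields
\begin{equation*}
    \tilde{\phi}(x) = x - \frac{cf(x)}{(c/r)f'(x)} = x - r\frac{f(x)}{f'(x)},
\end{equation*}
which is the form (\ref{eq_NR}).

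The main obstacle will be the degree/multiplier analysis at infinity: one must rule out degenerate possibilities for $\deg Q$ and check the sign/formula for $\lambda_\infty$ carefully, since the formula for the multiplier at an infinite fixed point requires a coordinate change $y = 1/x$. Once this is handled, the rest is a clean linear-algebra interpolation identical in spirit to Theorem \ref{thm_family}.
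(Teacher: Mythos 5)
Your proof is correct and follows the interpolation argument the paper intends (the paper omits the proof, stating only that it is ``identical to Theorem~\ref{thm_family}''); the degree analysis at infinity you supply to pin down $\deg Q = d-2$ is exactly the extra detail needed to adapt that argument. One small slip: when $\deg Q < d-2$ the multiplier at infinity is $0$ (superattracting), not $1$ as you wrote, though this does not affect the conclusion.
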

        \begin{proof}
            The method of proof is identical to the proof of Theorem \ref{thm_family}, so is omitted.
        \end{proof}
        \begin{rem}
            Note that if we choose $r=1$, then all of the affine fixed points are also critical points ($\tilde{\phi}'(x)=0$) as noted in \cite[Corollary 1]{Crane}.
        \end{rem}

     \subsection{Connection to Maps with Automorphisms} \label{sect_automorphisms}
        Let $\Gamma \subset \PGL_2$ be a finite group.
        \begin{defn}
            We say that a homogeneous polynomial $F$ is an \emph{invariant of $\Gamma$} if $F \circ \gamma = \chi(\gamma) F$ for all $\gamma \in \Gamma$ and some character $\chi$ of $\Gamma$.  The \emph{invariant ring of $\Gamma$} denoted $K[X,Y]^{\Gamma}$ is the set of all invariants.
        \end{defn}
        The following was known as early as \cite[footnote p.345]{klein}.
        \begin{thm}\label{prop_auto}
            If $F(X,Y)$ is a homogeneous invariant of a finite group $\Gamma \subset \PGL_2$, then $\Gamma \subset \Aut(\phi_F)$.
        \end{thm}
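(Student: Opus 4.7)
The plan is to differentiate the invariance relation $F\circ\gamma = \chi(\gamma) F$ and then invoke a standard symplectic identity for $2\times 2$ matrices. Lift each $\gamma \in \Gamma$ to an element of $\GL_2$, still denoted $\gamma$, so that $F(\gamma(X,Y)) = \chi(\gamma) F(X,Y)$ holds as an identity in $K[X,Y]$. To show $\gamma \in \Aut(\phi_F)$ it suffices to produce a scalar $\lambda \in K^\times$ with $\phi_F\circ\gamma = \lambda\,(\gamma\circ\phi_F)$ as maps $K^2 \to K^2$, since such a projective equality is precisely $\gamma^{-1}\circ\phi_F\circ\gamma = \phi_F$ in $\PGL_2$.

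First I would apply the chain rule to both sides of the invariance equation. Writing $\gamma = \begin{pmatrix} a & b \\ c & d \end{pmatrix}$ and packaging partials into the column vector $\nabla F = (F_X,F_Y)^{T}$, differentiation in $X$ and $Y$ produces
\[
\gamma^{T}\,(\nabla F)\bigl(\gamma(X,Y)\bigr) \;=\; \chi(\gamma)\,(\nabla F)(X,Y),
\]
equivalently $(\nabla F)(\gamma(X,Y)) = \chi(\gamma)(\gamma^{T})^{-1}(\nabla F)(X,Y)$. This is the only analytic ingredient.

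Next comes the algebraic step. Writing $J = \begin{pmatrix} 0 & 1 \\ -1 & 0 \end{pmatrix}$, we have $\phi_F = J\,\nabla F$, so we need to intertwine $J(\gamma^{T})^{-1}$ with $\gamma J$. A direct $2\times 2$ computation (or recognition of $J$ as the standard symplectic form) yields the identity $\gamma J \gamma^{T} = (\det\gamma) J$, equivalently $J(\gamma^{T})^{-1} = (\det\gamma)^{-1}\gamma J$. Substituting into the gradient relation of the previous paragraph gives
\[
\phi_F(\gamma(X,Y)) \;=\; J\,(\nabla F)(\gamma(X,Y)) \;=\; \chi(\gamma)\,J(\gamma^{T})^{-1}(\nabla F)(X,Y) \;=\; \frac{\chi(\gamma)}{\det\gamma}\,\gamma\,\phi_F(X,Y),
\]
which is exactly the desired relation with $\lambda = \chi(\gamma)/\det\gamma$.

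I expect no serious obstacle. The entire content is the symplectic identity $\gamma J\gamma^{T} = (\det\gamma) J$, which is exactly what makes the $90^{\circ}$-rotated gradient $\phi_F = J\nabla F$ (rather than $\nabla F$ itself) automatically equivariant under any linear automorphism of $F$. Since $\chi$ is only defined up to rescaling of the $\GL_2$-lift of $\gamma$, the scalar $\chi(\gamma)/\det\gamma$ is invisible in $\PGL_2$, so the argument goes through without having to track the character carefully.
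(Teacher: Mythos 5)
Your proof is correct and is exactly the argument the paper has in mind: the paper's proof reads, in its entirety, ``Easy application of the chain rule.'' You have simply written out what that one-liner leaves implicit, in particular the symplectic identity $\gamma J\gamma^{T}=(\det\gamma)J$ that explains why the rotated gradient $J\nabla F$ (rather than $\nabla F$ itself) is $\Gamma$-equivariant up to scalar.
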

        \begin{proof}
            Easy application of the chain rule.
        \end{proof}

    \section{Connection to Latt\`es Maps} \label{sect_lattes}
    Consider an elliptic curve with Weierstrass equation $E: y^2 = g(x)$ for $g(x) = x^3 + ax^2+bx+c$.  The solutions $g(x) = 0$ are the $2$-torsion points.  If we integrate $g(x)$ we get $G(x)=x^4/4 + a/3x^3 + b/2x^2 + cx + C$ for some constant $C$.  If we let $C = -(b^2-4ac)/12$, then the solutions $G(x)=0$ are the $3$-torsion points.

    Recall that a Latt\`es map is the induced rational function on the first coordinate of the multiplication map $[m] \in \End(E)$ on the rational points of an elliptic curve $E$; $\phi_{E,m}(x(P)) = x([m])$.
        For integers $m \geq 3 $ we have
        \begin{equation*}
            [m](x,y) = \left(x - \frac{\Psi_{E,m-1}\Psi_{E,m+1}}{\Psi_{E,m}^2},\frac{\Psi_{E,m+2}\Psi_{E,m-1}^2 - \Psi_{E,m-2}\Psi_{E,m+1}^2}{4y\Psi_{E,m}^3}\right).
        \end{equation*}
        In other words, the induced Latt\`es map is given by
        \begin{equation*}
            \phi_{E,m}(x) = x - \frac{\Psi_{E,m-1}\Psi_{E,m+1}}{\psi_{m}^2}.
        \end{equation*}
        Hence the fixed points of the Latt\`es maps are the $x$-coordinates of the $m-1$ and $m+1$ torsion points.  For $m=2$, the fixed points of $\phi_{E,2}$ are the $3$ torsion points.

        \begin{exmp}
            Given an elliptic curve of the form $y^2 = g(x) = x^3 + ax^2 + bx + c$.
            The $2$-torsion points satisfy $y^2=0$, so are fixed points of the map derived from homogenizing $g(x)$.
            \begin{align*}
                F(X,Y) &= X^3 + aX^2Y + bXY^2 + XY^3\\
                \phi_F(X,Y) &= (aX^2 + 2bXY + 3XY^2, -(2aX + bY^2))
            \end{align*}

            The fixed points of the doubling map are the points where $x([2]P) = x(P)$, in other words, the points of order $3$.  They are the points which satisfy the equation
            \begin{equation*}
                \Psi_{E,3}(x) = 3x^4 + 4ax^3 + 6bx^2 + 12cx + (4ac-b^2) = 2g(x)g''(x) - (g'(x))^2
            \end{equation*}
            So we have
            \begin{align*}
                F(X,Y) &= 3X^4 + 4aX^3Y + 6bX^2Y^2 + 12cXY^3 + (4ac-b^2)Y^4\\
                \phi_F(X,Y) &= (4aX^3 + 12bX^2Y +36cXY^2 + 4(4ac-b^2)Y^3,\\
                &-(12X^3 + 12aX^2Y + 12bXY^2 + 12cY^3)).
            \end{align*}
        \end{exmp}
        For $m=2$ we get the following stronger result, connecting generalized $\phi_F$ and Latt\`es maps.
        \begin{thm} \label{thm_lattes}
            Maps of the form
            \begin{equation*}
                \tilde{\phi}(x) = x - 3\frac{f(x)}{f'(x)}
            \end{equation*}
            are the Latt\`es maps from multiplication by $[2]$ and $f(x) = \prod(x-x_i)$ where $x_i$ are the $x$-coordinates of the $3$-torsion points.
        \end{thm}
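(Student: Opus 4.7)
The plan is to re-derive $\phi_{E,2}$ in the claimed form by direct substitution into the formula $\phi_{E,2}(x) = x - \Psi_{E,1}\Psi_{E,3}/\Psi_{E,2}^2$ recalled just before the statement. Writing $E: y^2 = g(x)$ with $g(x) = x^3 + ax^2 + bx + c$, I would use $\Psi_{E,1} = 1$, $\Psi_{E,2}^2 = 4g$, and the identity $\Psi_{E,3} = 2 g g'' - (g')^2$ from the preceding example. The crux is a one-line observation: because $g$ is cubic, $g'''$ is the constant $6$, so
\begin{equation*}
    \Psi_{E,3}'(x) = 2 g'(x) g''(x) + 2 g(x) g'''(x) - 2 g'(x) g''(x) = 12\, g(x).
\end{equation*}
Substituting this into the Lattès formula immediately yields
\begin{equation*}
    \phi_{E,2}(x) = x - \frac{\Psi_{E,3}(x)}{4 g(x)} = x - \frac{3\,\Psi_{E,3}(x)}{\Psi_{E,3}'(x)},
\end{equation*}
which is the claimed form with $f$ equal to $\Psi_{E,3}$ (the leading coefficient $3$ of $\Psi_{E,3}$ cancels in the ratio, so the scalar normalization $f = \prod(x - x_i)$ produces the identical map). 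Since the roots of $\Psi_{E,3}$ are exactly the $x$-coordinates of the nontrivial $3$-torsion, this gives the forward direction.

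For the converse direction — that any map of the form $\tilde\phi(x) = x - 3 f(x)/f'(x)$ whose $f$ cuts out the $3$-torsion $x$-coordinates of some $E$ is a Lattès map for $[2]$ — I would invoke Theorem~\ref{thm_modified_newton} with $r=3$ and $d=5$. The map $\tilde\phi$ has $\deg f = 4$ affine fixed points with multiplier $1-r = -2$ and fixes $\infty$ with multiplier $(d-1)/(d-r-1) = 4$. The Lattès map $\phi_{E,2}$ of the curve $E$ whose $3$-torsion $x$-coordinates are the roots of $f$ shares the same four affine fixed points, each with multiplier $-2$ (since $[2]P = -P$ at a $3$-torsion $P$ and the projection $x\col E \to \P^1$ is unramified at $\pm P$, giving derivative $-m = -2$), and fixes $\infty$ with multiplier $m^2 = 4$. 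The uniqueness in Theorem~\ref{thm_modified_newton} then identifies $\tilde\phi$ with $\phi_{E,2}$.

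The only genuine obstacle is the derivative identity $\Psi_{E,3}' = 12\, g$, and even that is a one-line calculation exploiting $g''' = 6$; everything else is either direct substitution or an appeal to Theorem~\ref{thm_modified_newton} and the standard multiplier computations for flexible Lattès maps.
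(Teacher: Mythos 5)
Your proposal is correct, and the forward direction is a genuinely different argument from the paper's. The paper proves the theorem by quoting \cite[Proposition 6.52]{Silverman10} for the multiplier data of $\phi_{E,2}$ (four affine fixed points at $3$-torsion $x$-coordinates each with multiplier $-2$, the fixed point at $\infty$ with multiplier $4$) and then invoking Theorem~\ref{thm_modified_newton} with $r=3$, $d=5$. Your ``converse direction'' reproduces that argument essentially verbatim. Your ``forward direction,'' by contrast, is a new and pleasingly elementary derivation: you observe that $\Psi_{E,3}=2gg''-(g')^2$ with $g$ cubic forces $\Psi_{E,3}'=2gg'''=12g$, so that
\begin{equation*}
  \phi_{E,2}(x) \;=\; x-\frac{\Psi_{E,3}(x)}{4g(x)} \;=\; x-\frac{3\,\Psi_{E,3}(x)}{\Psi_{E,3}'(x)},
\end{equation*}
which is already exactly the claimed form. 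This buys a self-contained one-line proof that does not rely on the Silverman multiplier proposition at all, at the cost of needing the explicit division polynomial expressions. Note, though, that once you have the forward computation the converse is immediate---if $f$ is the monic polynomial with roots the $3$-torsion $x$-coordinates of $E$, then $f$ is a scalar multiple of $\Psi_{E,3}$ and the scalar cancels in $f/f'$, so $x-3f/f'=\phi_{E,2}$ by the identity you just established; the appeal to Theorem~\ref{thm_modified_newton} is redundant. One small point worth making explicit: the paper states the formula $[m](x,y)=\bigl(x-\Psi_{E,m-1}\Psi_{E,m+1}/\Psi_{E,m}^2,\ \ldots\bigr)$ for $m\ge 3$, whereas you are using the $x$-coordinate part at $m=2$; that instance is just the classical duplication formula, so it holds, but you should say so rather than silently extending the stated range.
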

        \begin{proof}
            From \cite[Proposition 6.52]{Silverman10} we have the multiplies are all $-2$ except at $\infty$ where it is $4$ and the fixed points are the $3$ torsion points (plus $\infty$).  Now apply Theorem \ref{thm_modified_newton}.
        \end{proof}


        \subsection{Complex Multiplication and Automorphisms}
            For an elliptic curve $E$, every automorphism is of the form $(x,y) \mapsto (u^2x,u^3y)$ for some $u \in \C^{\ast}$ \cite[III.10]{silverman1}.  In general, the only possibilities are $u = \pm 1$ and $\Aut(E) \cong \Z/2\Z$.  However, in the case of complex multiplication $\End(E) \supsetneq \Z$ and it is possible to contain additional roots of unity, thus having $\Aut(E) \supsetneq \Z/2\Z$.  The two cases are $j(E)= 0,1728$ having $\Aut(E) \cong \Z/6\Z, \Z/4\Z$ respectively \cite[III.10]{silverman1}. These additional automorphisms induce a linear action $x \mapsto u^2x$ which fixes a polynomial whose roots are torsion points.  Thus, the corresponding map $\phi_F$ has a non-trivial automorphism of the form
            \begin{equation*}
                \begin{pmatrix}
                  u^2 & 0 \\ 0 & 1
                \end{pmatrix} \in \PGL_2.
            \end{equation*}
            Thus we have shown the following theorem.

            \begin{thm}\label{thm_CM}
                If $E$ has $\Aut(E) \supsetneq \Z/2\Z$ and the zeros of $F(X,Y)$ are torsion points of $E$, then an induced map $\phi_F$ has a non-trivial automorphism group.
            \end{thm}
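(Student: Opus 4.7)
The plan is to exhibit a non-trivial element of $\Aut(\phi_F)$ explicitly by recognizing $F$ as a semi-invariant of a cyclic subgroup of $\PGL_2$ and then invoking Theorem \ref{prop_auto}. The argument is really packaged into three short steps.

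First, I would use the classification of elliptic curve automorphisms recalled just above the theorem: every $\alpha\in\Aut(E)$ has the form $(x,y)\mapsto(u^2x,u^3y)$, and the hypothesis $\Aut(E)\supsetneq\Z/2\Z$ (i.e.\ $j(E)\in\{0,1728\}$) guarantees the existence of such an $\alpha$ with $u^2\neq 1$. Projecting onto $x$-coordinates produces a linear action $x\mapsto u^2 x$ on $\P^1$, represented by the matrix
\begin{equation*}
\gamma=\begin{pmatrix} u^2 & 0\\ 0 & 1\end{pmatrix}\in\PGL_2,
\end{equation*}
whose order in $\PGL_2$ is non-trivial because $u^2\neq 1$.

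Second, I would check that $\gamma$ permutes the zero set of $F$. Since $\alpha$ is a group automorphism of $E$, it stabilizes every torsion subgroup $E[n]$; because the $x$-coordinate factors through $P\mapsto -P$, the induced map on $x$-coordinates permutes the $x$-coordinates of any full $n$-torsion set (which is the natural reading of ``the zeros of $F$ are torsion points of $E$''). Consequently $F(u^2 X,Y)$ and $F(X,Y)$ are homogeneous of the same degree with the same zero locus in $\P^1$, so there is a scalar $\chi(\gamma)$ with $F\circ\gamma=\chi(\gamma)F$. In other words, $F$ is an invariant, in the paper's sense, of the finite cyclic group $\langle\gamma\rangle\subset\PGL_2$.

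Third, I would apply Theorem \ref{prop_auto} to conclude $\langle\gamma\rangle\subset\Aut(\phi_F)$; since $\gamma\neq\id$ in $\PGL_2$, the automorphism group of $\phi_F$ is non-trivial. The only delicate point in the plan is the second step: one needs the zero set of $F$, not merely each individual zero, to be stable under $x\mapsto u^2 x$. This is automatic in the paradigmatic construction where the zeros are the $x$-coordinates of $E[n]$, and more generally holds whenever the zero set is a union of orbits of $\alpha$ on torsion; I expect this is where the slight ambiguity in the hypothesis must be pinned down in a careful write-up.
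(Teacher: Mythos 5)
Your argument follows the paper's own proof almost step for step: both descend a nontrivial automorphism of $E$ to the diagonal element $\gamma\in\PGL_2$ acting by $x\mapsto u^2x$, observe that $F$ is a semi-invariant of $\langle\gamma\rangle$, and conclude (via Theorem~\ref{prop_auto}, which the paper invokes only implicitly) that $\gamma\in\Aut(\phi_F)$. The one point where you go beyond the paper is in flagging that the zero \emph{set} of $F$, not merely each individual root, must be stable under $x\mapsto u^2x$ --- a genuine hypothesis the published proof leaves unstated --- but this does not change the structure of the argument.
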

            \begin{exmp}
                Let $E = y^2 = x^3 + ax$, for $a \in \Z$, then $j(E)=1728$ and $\End(E)$ contains the map $(x,y) \mapsto (-x,iy)$.  Thus, the automorphism group of every $\phi_F$ coming from torsion points satisfies
                \begin{equation*}
                    \langle \begin{pmatrix} -1 & 0 \\ 0 & 1 \end{pmatrix} \rangle \subset \Aut(\phi_F).
                \end{equation*}
            \end{exmp}



\end{document}